\pgfplotsset{compat=newest}
\newcommand{\R}{\mathbb R}
\newcommand{\bA}{\mathbf A}
\newcommand{\bH}{\mathbf H}
\newcommand{\bI}{\mathbf I}
\newcommand{\bP}{\mathbf P}
\newcommand{\bV}{\mathbf V}
\newcommand{\ba}{\mathbf a}
\newcommand{\bb}{\mathbf b}
\newcommand{\blf}{\mathbf f}
\newcommand{\bn}{\mathbf n}
\newcommand{\be}{\mathbf e}
\newcommand{\bp}{\mathbf p}
\newcommand{\br}{\mathbf r}
\newcommand{\bu}{\mathbf u}
\newcommand{\bU}{\mathbf U}
\newcommand{\bv}{\mathbf v}
\newcommand{\bw}{\mathbf w}
\newcommand{\bx}{\mathbf x}
\newcommand{\A}{\mathcal A}
\newcommand{\T}{\mathcal T}
\newcommand{\Div}{\mathop{\rm div}}
\newcommand{\divG}{{\mathop{\,\rm div}}_{\Gamma}}
\newcommand{\gradG}{\nabla_{\Gamma}}
\newcommand{\nablaG}{\nabla_{\Gamma}}
\newcommand{\OGamma}{\Omega^\Gamma_h}
\newcommand{\tr}{{\rm tr}}
\newcommand{\la}{\left\langle}
\newcommand{\ra}{\right\rangle}
\def\cl {\nonumber \\}
\def\el {\nonumber }
\newtheorem{assumption}{Assumption}[section]
\newtheorem{remark}{Remark}[section]
\begin{document}
\title{A finite element method for the surface Stokes problem}
\author{
Maxim A. Olshanskii\thanks{Department of Mathematics, University of Houston, Houston, Texas 77204 (molshan@math.uh.edu); Partially supported by NSF through the Division of Mathematical Sciences grants 1522252 and 1717516.}
\and Annalisa Quaini\thanks{Department of Mathematics, University of Houston, Houston, Texas 77204 (quaini@math.uh.edu); Partially supported by NSF through grant DMS-1263572 and DMS-1620384.},
\and Arnold Reusken\thanks{Institut f\"ur Geometrie und Praktische  Mathematik, RWTH-Aachen
University, D-52056 Aachen, Germany (reusken@igpm.rwth-aachen.de).}
\and
Vladimir Yushutin \thanks{Department of Mathematics, University of Houston, Houston, Texas 77204 (yushutin@math.uh.edu)}
}
\maketitle
%
\begin{abstract}
We consider a Stokes problem posed on a 2D surface embedded in a 3D domain.
The equations describe an equilibrium, area-preserving tangential flow of a viscous  surface fluid and
serve as a model problem in the dynamics of material interfaces.
In this paper, we develop and analyze a Trace finite element method (TraceFEM)
for such a surface Stokes problem. TraceFEM relies on finite element spaces defined on a fixed, surface-independent background mesh which consists of shape-regular tetrahedra.
Thus, there is no need for surface parametrization or surface fitting with the mesh.
The TraceFEM treated here is based on $P_1$ bulk finite elements for both the velocity and the pressure.
In order to enforce the velocity vector field to be tangential to the surface we introduce a penalty term.
The method  is straightforward to implement and  has an $O(h^2)$ geometric consistency error, which is of the same order as the approximation error due to  the $P_1$--$P_1$ pair for velocity and pressure.
We prove stability and optimal order discretization error bounds in the surface $H^1$ and $L^2$ norms.
A series of numerical experiments is presented to  illustrate certain features of the proposed TraceFEM.
\end{abstract}
\begin{keywords}
Surface fluid equations; Surface Stokes problem; Trace finite element method.
 \end{keywords}

\section{Introduction}
Fluid equations  on manifolds appear in the literature on modeling of emulsions, foams and  biological  membranes.
See, for example, \cite{scriven1960dynamics,slattery2007interfacial,arroyo2009,brenner2013interfacial, rangamani2013interaction,rahimi2013curved}.
They are also studied as an interesting mathematical problem in its own right in, e.g.,  \cite{ebin1970groups,Temam88,taylor1992analysis,arnol2013mathematical,mitrea2001navier, arnaudon2012lagrangian,Gigaetal}. Despite the apparent practical and  mathematical relevance,
so far fluids on manifolds have received little attention  from the scientific computing community.
Only few papers, like for instance \cite{holst2012geometric,nitschke2012finite,barrett2014stable,reuther2015interplay, ReuskenZhang,reuther2017solving,fries2017higher}, treat
the development and analysis of numerical  methods for surface fluid equations or coupled bulk--surface fluid problems.
Among those papers, \cite{reuther2017solving,fries2017higher} applied surface finite element methods to discretize
the incompressible surface Navier-Stokes equations in primitive variables on stationary manifolds.
In \cite{reuther2017solving}, the authors considered $P_1$-$P_1$ finite elements with no pressure stabilization
and a penalty technique to force the flow field to be tangential to the surface.
In \cite{fries2017higher}, instead, surface Taylor--Hood elements are used and combined with  a Lagrange multiplier method to enforce the tangentiality constraint.
Neither references address the numerical analysis of
finite element methods for the surface Navier-Stokes equations.
In general,
we are not aware of any paper containing a rigorous analysis of
finite element (or any other) discretization methods for surface (Navier-)Stokes equations.

In recent years,
several papers on discretization methods for \emph{scalar} elliptic and parabolic partial differential equations on surfaces
have appeared. We refer to~\cite{DEreview,olshanskii2016trace} for a review on surface finite element methods.
Only very recently finite element methods have been applied and analyzed for \emph{vector} Laplace equations on surfaces~\cite{hansbo2016analysis,gross2017trace}. This is a first natural step in extending the methods and analyses
proposed for the scalar problems to  surface (Navier--)Stokes equations.
In \cite{hansbo2016analysis},
the authors analyzed a surface FEM combined with a penalty technique to impose the tangentiality constraint.
The results include stability and error analysis, which also account for the effects of geometric errors.
The approach presented in \cite{gross2017trace} is different: an unfitted finite element method (TraceFEM)
combined with a Lagrange multiplier technique to enforce the discrete vector fields to be (approximately) tangential
to the surface. Stability and optimal order error estimates were also proved in \cite{gross2017trace}. The present
paper continues along this latter line of research and studies the TraceFEM applied to the
Stokes equations posed on a stationary closed smooth surface $\Gamma$ embedded in $\R^3$.

The choice of the geometrically  \emph{un}fitted discretization (instead of the surface FEM  as in
\cite{reuther2017solving,fries2017higher}) is motivated by our ultimate goal:
the numerical simulation of fluid flows on \emph{evolving} surfaces $\Gamma(t)$~\cite{arroyo2009,Gigaetal,Jankuhn1}, including  cases where a parametrization of $\Gamma(t)$ is not explicitly available and $\Gamma(t)$ may undergo large deformations or even topological changes. Unfitted discretizations, such as TraceFEM,
allow to avoid mesh reconstruction for the time-dependent geometry and
to treat implicitly defined surfaces. As illustrated in \cite{olshanskii2014eulerian,lehrenfeld2017stabilized},
TraceFEM works very well for scalar PDEs posed on evolving surfaces
and can be naturally combined with the level set method for (implicit) surface representation.
In~\cite{Jankuhn1}, it is shown that the surface (Navier--)Stokes equations used to model
incompressible surface fluid systems on  evolving surfaces
admit a natural splitting into coupled equations for  tangential and normal motions.
Such splitting and time discretization yield a subproblem that is very similar to the
Stokes problem treated in this paper.  Hence, we consider the detailed study
of a trace FEM for the  Stokes problem on a stationary surface to be an important step
in the development of a robust and efficient finite element solver
for the Navier-Stokes equations on evolving surfaces.
In addition, the method treated in this paper can be used to study interesting properties of
Stokes problems on stationary surfaces, as illustrated by the numerical experiments presented in section~\ref{sectExp}.

The TraceFEM considered in this paper is based on the $P_1$--$P_1$ finite element pair
defined on the background mesh. Pressure stabilization is achieved through the
simple Brezzi--Pitk\"{a}ranta stabilization.
Unlike \cite{gross2017trace}, we consider a penalty technique for the tangentiality condition.
Altogether, this results in a straightforward to use solver for fluid equations posed on surfaces.
An alternative approach with higher order (generalized) Taylor--Hood elements and
a Lagrange multiplier method will be subject of future work.

The principal contributions of  the paper are the following:
\begin{itemize}
\item[-] Introduction of an  easy to implement TraceFEM, which is based on
a piecewise planar approximation of the surface, $P_1$--$P_1$ finite elements on the background mesh, Brezzi--Pitk\"{a}ranta and so-called ``volume normal derivative''  stabilization terms, and a penalty method for the tangentiality constraint.
\item[-] Error analysis showing optimal order $O(h)$ error estimates in the $H^1(\Gamma)^3$ norm for velocity and $L^2(\Gamma)$ norm for pressure, and an optimal $O(h^2)$ error estimate
for velocity in the $L^2(\Gamma)^3$ norm.
All these estimates do not depend on the position of $\Gamma$ in the background mesh. The analysis does {not} account for the effect of geometric errors.
\item[-] Study of the conditioning of the resulting saddle point stiffness matrix.
We prove that the spectral condition number of this matrix is bounded by $c h^{-2}$, with a constant  $c$ that is independent of the position of the surface $\Gamma$ relative to the underlying
triangulation.
\item[-] Presentation of an optimal preconditioner.
\end{itemize}


The outline of the paper is as follows. In section~\ref{s_cont},
we introduce the surface Stokes system and some notions of  tangential differential calculus.
We give a weak formulation of the problem and recall some known results.
In section~\ref{s_pen}, we study the augmented surface Stokes  problem, i.e.
the problem with additional penalty terms.
An estimate on the difference between weak solutions of the original and augmented problem is derived.
An unfitted finite element method (TraceFEM) for the surface Stokes problem is introduced in section~\ref{s_TraceFEM}.
The role of the different stabilization terms is explained.
In section~\ref{s_error}, an error analysis of this method is presented.
A discrete inf-sup stability and optimal a-priori discretization error bounds are proved.
In section~\ref{s_cond}, we analyze the conditioning properties of the resulting saddle point matrix and an optimal preconditioner is introduced. Numerical results in section~\ref{sectExp} illustrate  the performance of the method in terms of error convergence,  efficiency of the linear solver, and flexibility in handling implicitly defined geometries.

\section{Continuous problem}\label{s_cont}
Assume that $\Gamma$ is a closed sufficiently smooth surface in $\mathbb{R}^3$. The outward pointing unit normal on $\Gamma$ is denoted by $\bn$, and the orthogonal projection on the tangential plane is given by $\bP=\bP(\bx):= \bI - \bn(\bx)\bn(\bx)^T$, $\bx \in \Gamma$. In a neighborhood $\mathcal{O}(\Gamma)$  of $\Gamma$ the closest point projection $\bp:\,\mathcal{O}(\Gamma)\to \Gamma$ is well defined.  For a scalar function $p:\, \Gamma \to \mathbb{R}$ or a vector function $\bu:\, \Gamma \to \mathbb{R}^3$  we define $p^e=p\circ \bp\,:\,\mathcal{O}(\Gamma)\to\mathbb{R}$, $\bu^e=\bu\circ \bp\,:\,\mathcal{O}(\Gamma)\to\mathbb{R}^3$,   extensions of $p$ and $\bu$ from $\Gamma$ to its neighborhood $\mathcal{O}(\Gamma)$ along the normal directions.
On $\Gamma$ it holds $\nabla p^e= \bP\nabla p^e$  and  $\nabla\bu^e=\nabla\bu^e\bP$, with $\nabla \bu:= (\nabla u_1~ \nabla u_2 ~\nabla u_3)^T \in \mathbb{R}^{3 \times 3}$ for vector functions $\bu$. The surface gradient and covariant derivatives on $\Gamma$ are then defined as $\nablaG p=\bP\nabla p^e$ and  $\nabla_\Gamma \bu:= \bP \nabla \bu^e \bP$. Note that the definitions  of surface gradient and covariant derivatives are  independent of a particular smooth extension of $p$ and $\bu$ off $\Gamma$.
The reason why we consider normal extensions is because they are  convenient for the error analysis.
On $\Gamma$ we consider the surface rate-of-strain tensor \cite{GurtinMurdoch75} given by
\begin{equation} \label{strain}
 E_s(\bu):= \frac12 \bP (\nabla \bu +\nabla \bu^T)\bP = \frac12(\nabla_\Gamma \bu + \nabla_\Gamma \bu^T).
 \end{equation}
We also define the surface divergence operators for a vector $\bv: \Gamma \to \R^3$ and
a tensor $\bA: \Gamma \to \mathbb{R}^{3\times 3}$:
\[
 \divG \bv  := \tr (\gradG \bv), \qquad
 \divG \bA  := \left( \divG (\be_1^T \bA),\,
               \divG (\be_2^T \bA),\,
               \divG (\be_3^T \bA)\right)^T,
               \]
with $\be_i$ the $i$th basis vector in $\R^3$.

For a given  force vector $\mathbf{f} \in L^2(\Gamma)^3$, with $\mathbf{f}\cdot\bn=0$, and
source term $g\in L^2(\Gamma)$, with $\int_\Gamma g\,ds=0$, we consider the following surface Stokes problem:
Find a vector field $\bu:\, \Gamma \to \R^3$, with $\bu\cdot\bn =0$, such that
\begin{align} 
  - \bP \divG (E_s(\bu))+\alpha \bu +\nabla_\Gamma p &=  \blf \quad \text{on}~\Gamma,  \label{strongform-1} \\
  \divG \bu & =g \quad \text{on}~\Gamma, \label{strongform-2}
\end{align}
Here $\bu$ is the tangential fluid velocity, $p$  the surface fluid pressure, and
$\alpha\ge0$  is a real parameter. The steady Stokes problem corresponds to $\alpha=0$, while $\alpha>0$ leads to a generalized Stokes problem, which results from an implicit time
integration applied to a non-stationary Stokes equation ($\alpha$ being proportional
to the inverse of the time step).
The non-zero source term $g$ in problem \eqref{strongform-1}-\eqref{strongform-2}
is included to facilitate the  treatment of evolving fluidic interfaces as a next research step. In that case,
the inextensibility condition reads $\divG \bu_T =-u_N\kappa$, where $\kappa$ is the mean curvature
and $u_N$ is the normal component of the velocity. Indeed,
we use the velocity decompostion into tangential and normal components:
\begin{equation}\label{u_T_N}
\bu = \bu_T + u_N\bn,\quad \bu_T\cdot\bn=0.
\end{equation}
We remark that further in the text we use both $u_N$
and $\bn\cdot\bu$  to denote the normal compenent of the velocity $\bu$.
For the derivation of (Navier-)Stokes equations  for evolving fluidic interfaces
see, e.g., \cite{Jankuhn1}.

From problem \eqref{strongform-1}-\eqref{strongform-2} one readily observes the following:
the pressure field is defined up a hydrostatic mode; for $\alpha=0$
all tangentially rigid surface fluid motions,  i.e. satisfying  $E_s(\bu)=0$,
are in the kernel of the differential operators at the left-hand side of eq.~\eqref{strongform-1}.
Integration by parts implies the
consistency condition for the right-hand side of eq.~\eqref{strongform-1}:
 \begin{equation}\label{constr}
 \int_\Gamma \blf \cdot \bv\,ds=0\quad\text{for all smooth tangential vector fields}~~\bv~~ \text{s.t.}~~ E_s(\bv)=\mathbf{0}.
 \end{equation}
This condition  is necessary for the well-posedness
of problem \eqref{strongform-1}-\eqref{strongform-2} when $\alpha=0$.
In the literature a tangential vector field $\bv$ defined on a surface and satisfying
$E_s(\bv)=\mathbf{0}$ is known as \textit{Killing vector field} (cf., e.g., \cite{sakai1996riemannian}).
For a smooth two-dimensional Riemannian manifold,
Killing vector fields form a Lie algebra of dimension 3 at most.
The subspace of all the Killing vector fields on $\Gamma$
plays an important role in the analysis of the problem \eqref{strongform-1}-\eqref{strongform-2}.

We introduce the following assumption:

\begin{assumption}\label{A1} We assume that either no non-trivial Killing vector field exists on
$\Gamma$ or $\alpha>0$.
\end{assumption}

\begin{remark} \rm We briefly comment on assumption~\ref{A1}. If a non-trivial Killing vector field exists on $\Gamma$,
then for the well-posedness of the surface Stokes problem \eqref{strongform-1}-\eqref{strongform-2} with $\alpha =0$
 one has to restrict the velocity space to a suitable space that does not contain these Killing fields.  If
eq.~\eqref{strongform-1} with $\alpha=0$ is understood as
an equation for the equilibrium motion, i.e. the steady state, then
the equilibrium solution is uniquely defined by an  initial velocity $\bu_0$.
In fact, all tangentially rigid modes in $\bu_0$ are conserved by the time-dependent surface Stokes equation.
In order to find the unique equilibrium motion one has to consider a
 time-discretization for the time-dependent surface Stokes equation,
that is equation~\eqref{strongform-1} with $\alpha>0$. In order to avoid these (technical)
difficulties for the case $\alpha=0$ and non-trivial Killing vector fields, we introduce assumption~\ref{A1}.
\end{remark}

\begin{remark} \label{remLaplacian} \rm
The operator $ \bP \divG E_s(\cdot)$ in equation~\eqref{strongform-1} models surface diffusion, which
is a key component in modeling Newtonian surface fluids and fluidic membranes \cite{scriven1960dynamics,GurtinMurdoch75,barrett2014stable,Gigaetal,Jankuhn1}.
In the literature there are different formulations of the surface Navier--Stokes equations,
some of which are formally obtained by substituting Cartesian differential operators by their geometric counterparts~\cite{Temam88,cao1999navier}.  These formulations may involve different surface Laplace type operators, e.g., Hodge--de Rham Laplacian.
We refer to \cite{Jankuhn1} for a brief overview of different formulations of  the surface Navier--Stokes equations.
\end{remark}
\smallskip

For the weak formulation of problem \eqref{strongform-1}-\eqref{strongform-2},
we introduce the space $\bV:=H^1(\Gamma)^3$ and norm
 \begin{equation} \label{H1norm}
  \|\bu\|_{1}^2:=\int_{\Gamma}(\|\bu(s)\|^2 + \|\nabla\bu^e (s)\|^2)\,ds,
 \end{equation}
where $\|\cdot\|$ denotes the vector $\ell^2$-norm and the matrix  Frobenius norm. We define the spaces
\begin{equation}   \label{defVT}
 \bV_T:= \{\, \bu \in \bV~|~ \bu\cdot \bn =0\,\},\quad E:= \{\, \bu \in \bV_T~|~ E_s(\bu)=\mathbf{0}\,\}.
\end{equation}
Note that $E$ is a closed subspace of $\bV_T$ and $\mbox{dim}(E)\le 3$.
We define the Hilbert space $\bV_T^0$ as an orthogonal complement of  $E$ in $\bV_T$
(hence $\bV_T^0 \sim \bV_T/E$).
For $\bu \in \bV$ we will use the orthogonal decomposition into tangential and normal parts
as in \eqref{u_T_N}.
In what follows, we will need both general and tangential vector fields on $\Gamma$.
Finally, we define $L_0^2(\Gamma):=\{\, p \in L^2(\Gamma)~|~ \int_\Gamma p\,dx=0\,\}$.

Consider the bilinear forms (with $A:B={\rm tr}\big(AB^T\big)$ for  $A,B\in\mathbb{R}^{3\times3}$)
\begin{align}
a(\bu,\bv)& := \int_\Gamma (E_s(\bu):E_s(\bv)+\alpha\bu\cdot\bv) \, ds, \quad \bu,\bv \in \bV, \label{defblfa} \\
b_T(\bu,p) &:= - \int_\Gamma p\,\divG \bu_T \, ds,  \quad \bu \in \bV, ~p \in L^2(\Gamma). \label{defblfb}
\end{align}
We emphasize that in the definition of $b_T(\bu,p)$ only the \emph{tangential} component of $\bu$ is used, i.e., $b_T(\bu,p)=b_T(\bu_T,p)$ for all $\bu \in \bV$, $p\in L^2(\Gamma)$. This property motivates the notation $b_T(\cdot,\cdot)$ instead of $b(\cdot,\cdot)$. 

The weak (variational) formulation of the surface Stokes problem \eqref{strongform-1}-\eqref{strongform-2}
reads: Determine 
$(\bu_T,p) \in \bV_T \times L_0^2(\Gamma)$ 
such that
 \begin{align}
           a(\bu_T,\bv_T) +b_T(\bv_T,p) &=(\blf,\bv_T)_{L^2} \quad \text{for all}~~\bv_T \in \bV_T, \label{Stokesweak1_1} \\
           b_T(\bu,q) & = (g,q)_{L^2} \qquad \text{for all}~~q \in L^2(\Gamma). \label{Stokesweak1_2}
 \end{align}
Here, $(\cdot,\cdot)_{L^2}$ denotes the $L^2$ scalar product on $\Gamma$.
The following surface Korn   inequality and inf-sup property were derived in \cite{Jankuhn1}.
\begin{lemma} \label{Kornlemma}
Assume $\Gamma$ is $C^2$ smooth and compact. There exist $c_K >0$ and $c_0>0$ such that
 \begin{equation} \label{korn}
\|E_s(\bv_T)\|_{L^2} \geq c_K \|\bv_T\|_{1} \quad \text{for all}~~\bv_T \in \bV_T^0,
 \end{equation}
and
\begin{equation} \label{infsup}
 \sup_{\bv_T\in{\bV_T^0}}\frac{b_T(\bv_T,p)}{\|\bv_T\|_{1}{\color{blue}}}  \geq c_0 \|p\|_{L^2} \quad \text{for all}~~p\in L^2_0(\Gamma).
\end{equation}
\end{lemma}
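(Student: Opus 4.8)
The plan is to establish the two inequalities separately: first the surface Korn inequality \eqref{korn}, and then the inf--sup bound \eqref{infsup}, which will follow from Korn's inequality together with $H^2$ elliptic regularity for the Laplace--Beltrami operator on the closed surface $\Gamma$.

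For \eqref{korn} I would argue in two stages. The first stage is a \emph{second Korn inequality} on $\Gamma$,
\begin{equation*}
  \|\bv_T\|_{1} \le C\big(\|E_s(\bv_T)\|_{L^2} + \|\bv_T\|_{L^2}\big) \qquad \text{for all } \bv_T \in \bV_T .
\end{equation*}
This is the genuinely local part: cover $\Gamma$ by finitely many coordinate charts, pick a subordinate smooth partition of unity, and in each chart write $\nabla_\Gamma \bv_T$ and $E_s(\bv_T)$ in local coordinates. The principal-order terms reproduce exactly the flat relation between the symmetrized and the full gradient, so the classical Euclidean Korn inequality applies on each chart; the remaining terms are algebraic in $\bv_T$ with coefficients built from the metric and the Christoffel symbols, hence of lower order and bounded by $C\|\bv_T\|_{L^2}$ (here $C^2$-smoothness of $\Gamma$ gives bounded Christoffel symbols, and compactness of $\Gamma$ gives a uniform constant). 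Summing over the partition of unity yields the displayed estimate. The second stage upgrades it to \eqref{korn} by compactness. Since $E=\ker\big(E_s|_{\bV_T}\big)$ is finite dimensional it is closed, and $\bV_T^0$ is its $\|\cdot\|_1$-orthogonal complement in $\bV_T$. If \eqref{korn} failed, there would be $\bv_T^k \in \bV_T^0$ with $\|\bv_T^k\|_{1}=1$ and $\|E_s(\bv_T^k)\|_{L^2}\to 0$; passing to a subsequence $\bv_T^k \rightharpoonup \bv_T$ in $\bV$, and by the compact embedding $H^1(\Gamma)\hookrightarrow L^2(\Gamma)$ also $\bv_T^k \to \bv_T$ in $L^2(\Gamma)$. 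Applying the second Korn inequality to $\bv_T^k-\bv_T^\ell$ and using $\|E_s(\bv_T^k)\|_{L^2}\to0$ together with the $L^2$-convergence shows $(\bv_T^k)$ is Cauchy in $\bV$; hence $\bv_T^k \to \bv_T$ in $\bV$ with $\|\bv_T\|_1=1$, $\bv_T \in \bV_T^0$, and $E_s(\bv_T)=0$, i.e.\ $\bv_T \in E \cap \bV_T^0 = \{\mathbf{0}\}$, a contradiction.

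For \eqref{infsup}, given $p\in L^2_0(\Gamma)$ I would take the unique $\phi\in H^1(\Gamma)$ with $\int_\Gamma\phi\,ds=0$ solving $-\Delta_\Gamma\phi=p$ on the closed surface $\Gamma$; $H^2$ elliptic regularity (valid since $\Gamma$ is $C^2$) gives $\phi\in H^2(\Gamma)$ with $\|\phi\|_{H^2(\Gamma)}\le C\|p\|_{L^2}$. Set $\bw:=\nabla_\Gamma\phi\in\bV_T$, so $\divG\bw=\Delta_\Gamma\phi=-p$ and $\|\bw\|_1\le C\|\phi\|_{H^2(\Gamma)}\le C\|p\|_{L^2}$, whence $b_T(\bw,p)=-\int_\Gamma p\,\divG\bw\,ds=\|p\|_{L^2}^2$. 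To land in the test space $\bV_T^0$, decompose $\bw=\bw^0+\bw^E$ with $\bw^0\in\bV_T^0$, $\bw^E\in E$; every Killing field is divergence free since $\divG\bv=\tr E_s(\bv)$, so $b_T(\bw^E,p)=0$ and hence $b_T(\bw^0,p)=b_T(\bw,p)=\|p\|_{L^2}^2$, while $\|\bw^0\|_1\le\|\bw\|_1\le C\|p\|_{L^2}$. Therefore
\begin{equation*}
  \sup_{\bv_T\in\bV_T^0}\frac{b_T(\bv_T,p)}{\|\bv_T\|_{1}}\ \ge\ \frac{b_T(\bw^0,p)}{\|\bw^0\|_{1}}\ \ge\ \frac{\|p\|_{L^2}^2}{C\|p\|_{L^2}}\ =\ c_0\|p\|_{L^2},\qquad c_0:=1/C,
\end{equation*}
which is \eqref{infsup}.

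I expect the main obstacle to be the first stage of the Korn argument: making the "lower-order perturbation'' bookkeeping rigorous — controlling the chart-transition terms uniformly over the atlas and checking that no surviving first-order term escapes the flat Korn estimate — is the technical heart of the matter. The compactness upgrade, the Laplace--Beltrami construction, and the fact that Killing fields are divergence free are all routine once that is in place.
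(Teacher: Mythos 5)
Your proof is correct. Note that the paper itself does not prove this lemma --- it imports both inequalities from the reference \cite{Jankuhn1} --- so there is no in-paper proof to compare against; your argument is the standard one and, for the inf-sup part, coincides with the construction used in that reference (take $\bv_T=\nablaG\phi$ with $-\Delta_\Gamma\phi=p$, use $H^2$ regularity on the closed $C^2$ surface, and discard the Killing component, which is harmless because $\divG\bv=\tr E_s(\bv)$ vanishes on $E$). For the Korn inequality, your two-stage scheme (chart-localized flat Korn plus lower-order curvature terms to get $\|\bv_T\|_1\lesssim\|E_s(\bv_T)\|_{L^2}+\|\bv_T\|_{L^2}$, then a Rellich/contradiction argument on $\bV_T^0=E^\perp$) is a legitimate and standard route; the only bookkeeping point worth making explicit is that the paper's $\|\cdot\|_1$ uses the full ambient gradient $\nabla\bv^e$, whose normal part $\bn^T\nabla\bv_T^e\bP=-\bv_T^T\bH$ is itself a zeroth-order curvature term, so it is absorbed into the same $\|\bv_T\|_{L^2}$ remainder you already track.
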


Since $E$ is finite dimensional (and so all norms on $E$ are equivalent), inequality \eqref{korn} implies
 \begin{equation} \label{korn1}
\|\bv_T\|_{L^2}+ \|E_s(\bv_T)\|_{L^2} \geq c_K \|\bv_T\|_{1} \quad \text{for all}~~\bv_T \in \bV_T.
 \end{equation}

Using  \eqref{korn}
and Assumption~\ref{A1}  for $\alpha=0$ and  \eqref{korn1} for $\alpha>0$ we obtain the norm equivalence
\begin{equation} \label{normeq}
  a(\bv_T,\bv_T) \simeq \|\bv_T\|_1^2 \quad \text{for all}~~\bv_T \in \bV_T.
\end{equation}
Inf-sup stability of $b_T(\cdot,\cdot)$ on $\bV_T\times L^2_0(\Gamma)$ follows from  \eqref{infsup}.
Both bilinear forms $a(\cdot,\cdot)$ and $b_T(\cdot,\cdot)$ are  continuous.
Therefore \emph{problem \eqref{Stokesweak1_1}-\eqref{Stokesweak1_2} is well posed},
and its unique solution is further denoted by $\{\bu_T^\ast,p^\ast\}$.

\section{Penalty formulation}\label{s_pen}
The weak formulation \eqref{Stokesweak1_1}-\eqref{Stokesweak1_2}  is not very suited
for a Galerkin finite element discretization, since it requires finite element functions that are \emph{tangential} to $\Gamma$.
Such functions are not easy to construct.
Thus, following \cite{hansbo2016stabilized,hansbo2016analysis,Jankuhn1,reuther2017solving}
we consider a variational formulation in a larger space $\bV_\ast \supset \bV_T$, introduced below, augmented
by a \emph{penalty term to enforce the tangential constraint weakly}. This variational method will be the basis for the finite element method introduced in section~\ref{s_TraceFEM}.

In order to write the alternative variational formulation, we introduce
the following Hilbert space and corresponding norm:
\[
\bV_\ast  :=\{\, \bu \in L^2(\Gamma)^3\,:\,\bu_T \in \bV_T,~u_N\in L^2(\Gamma)\,\},  \quad\text{with}~~
\|\bu\|_{V_\ast}^2:=\|\bu_T\|_{1}^2+\tau\|u_N\|_{L^2}^2,
\]
where $\tau$ is a positive real parameter. We also report the following useful relation:
\begin{equation} \label{idfund}
E_s(\bu)=E_s(\bu_T) + u_N \bH,
\end{equation}
where $\bH := \nabla_\Gamma \bn$ is the shape operator (second fundamental form) on $\Gamma$.
Let us define the bilinear form
\begin{equation}\label{a_aug}
{a}_\tau(\bu,\bv):= \int_\Gamma\left( E_s( \bu): E_s( \bv)+\alpha\bu_T\cdot\bv_T \right)\, ds+\tau \int_{\Gamma} u_N v_N \, ds
\end{equation}
for $\bu,\bv \in \bV_\ast$. {Using \eqref{idfund} we can rewrite it as}
\begin{equation}\label{a_auga}
\begin{split}
{a}_\tau(\bu,\bv) & = a( \bu_T,\bv_T)+ \int_\Gamma E_s( \bu_T): v_N \bH \, ds + \int_\Gamma E_s( \bv_T): u_N \bH \, ds\\ & \quad +
(\|\bH\|^2u_N,v_N)_{L^2} + \tau (u_N,v_N)_{L^2},
\end{split}
\end{equation}
{which 
is  well-defined on $\bV_\ast\times \bV_\ast$.}
In \eqref{a_aug}, $\tau$ is an \emph{augmentation} (penalty) \emph{parameter}.
Using
\[
  2\int_\Gamma E_s( \bu_T): u_N \bH \, ds \geq - \frac12 \|E_s( \bu_T)\|_{L^2}^2 - 2 \|\bH\| u_N\|_{L^2}^2
\]
and \eqref{normeq} we conclude that if $\tau \geq \max \{1, 2 \|\bH\|_{L^\infty(\Gamma)}^2\}$, there are constants $c_0 >0$, $c_1$, \emph{independent of} $\tau$ such that
\begin{equation} \label{normeq1}
 c_0 \|\bu\|_{V_\ast}^2 \leq a_\tau(\bu,\bu) \leq c_1 \|\bu\|_{V_\ast}^2 \quad \text{for all}~~\bu \in V_\ast.
\end{equation}
\begin{assumption}\label{A2} In the remainder we assume that  $\tau \geq \max \{1, 2 \|\bH\|_{L^\infty(\Gamma)}^2\}$ holds.
\end{assumption}

The alternative variational formulation reads: Find $(\hat\bu,\hat p) \in \bV_\ast \times L_0^2(\Gamma)$ such that
 \begin{align}
           {a}_\tau(\hat \bu,\bv) +b_T(\bv,\hat p) &=(\blf,\bv)_{L^2} \quad \text{for all}~~\bv \in \bV_\ast \label{Stokesweak1B_1}\\
           b_T(\hat \bu,q) & = (g,q)_{L^2}\quad \text{for all}~~ q \in L^2(\Gamma). \label{Stokesweak1B_2}
 \end{align}
Well-posedness of the \textit{augmented surface Stokes problem} \eqref{Stokesweak1B_1}-\eqref{Stokesweak1B_2}
and an estimate on the difference between its solution and the solution to \eqref{Stokesweak1_1}-\eqref{Stokesweak1_2}
are given in the following theorem, which extends a result in \cite{Jankuhn1}.
 \begin{theorem}\label{Th_aug}
Problem \eqref{Stokesweak1B_1}-\eqref{Stokesweak1B_2} is well posed.
For the unique solution $(\hat\bu, \hat{p}) \in \bV_\ast \times L_0^2(\Gamma)$ of this problem and  the  unique solution  $(\bu_T^\ast,p^\ast) \in \bV_T \times L_0^2(\Gamma)$ of  \eqref{Stokesweak1_1}-\eqref{Stokesweak1_2} the following estimate holds
 \begin{equation}\label{EstAug}
 \|\hat\bu_T-\bu_T^\ast\|_{1}+\|\hat u_N\|_{L^2}+\|\hat p-p^\ast\|_{L^2}\le C\,\tau^{-1}(\|\blf\|_{L^2}+\|g\|_{L^2}),
 \end{equation}
 where $C$ depends only on $\Gamma$.
 \end{theorem}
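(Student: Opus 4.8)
The plan is to prove well-posedness by the Brezzi saddle-point theory and then to quantify the penalty-consistency error by testing the augmented system with well-chosen functions, while only ever invoking stability constants that do not depend on $\tau$.

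For well-posedness I would check the Brezzi hypotheses on $\bV_\ast\times L^2_0(\Gamma)$. The form $a_\tau$ is symmetric, and by \eqref{normeq1} (valid under Assumption~\ref{A2}) it is coercive and bounded on $\bV_\ast$ with constants independent of $\tau$; boundedness as a bilinear form then follows from the Cauchy--Schwarz inequality for the inner product $a_\tau$. The form $b_T$ is continuous on $\bV_\ast\times L^2_0(\Gamma)$ because $b_T(\bv,q)=b_T(\bv_T,q)$ and $\|\bv_T\|_1\le\|\bv\|_{V_\ast}$. For the inf-sup condition I use that $\bV_T^0\subset\bV_\ast$ and $\|\bv_T\|_{V_\ast}=\|\bv_T\|_1$ for $\bv_T\in\bV_T^0$, so that \eqref{infsup} at once gives $\sup_{\bv\in\bV_\ast}b_T(\bv,q)/\|\bv\|_{V_\ast}\ge c_0\|q\|_{L^2}$, again with a $\tau$-independent constant. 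Brezzi's theorem then yields existence, uniqueness, and the a priori bound $\|\hat\bu\|_{V_\ast}+\|\hat p\|_{L^2}\le C(\|\blf\|_{L^2}+\|g\|_{L^2})$ with $C$ depending only on $\Gamma$; in particular $\|\hat\bu_T\|_1\le C(\|\blf\|_{L^2}+\|g\|_{L^2})$.

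The decisive observation is that $\hat u_N$ is $O(\tau^{-1})$. Testing \eqref{Stokesweak1B_1} with $\bv=\hat u_N\bn\in\bV_\ast$ annihilates the pressure term (since $b_T$ sees only the tangential part) and the right-hand side (since $\blf\cdot\bn=0$), giving $a_\tau(\hat\bu,\hat u_N\bn)=0$. Expanding this with \eqref{a_auga} (taking $\bv_T=0$, $v_N=\hat u_N$) leaves $\int_\Gamma E_s(\hat\bu_T):\hat u_N\bH\,ds+(\|\bH\|^2\hat u_N,\hat u_N)_{L^2}+\tau\|\hat u_N\|_{L^2}^2=0$. Discarding the nonnegative $\|\bH\|$-term, estimating the remaining integral by $\|\bH\|_{L^\infty(\Gamma)}\|E_s(\hat\bu_T)\|_{L^2}\|\hat u_N\|_{L^2}\le\|\bH\|_{L^\infty(\Gamma)}\|\hat\bu_T\|_1\|\hat u_N\|_{L^2}$, inserting the bound on $\|\hat\bu_T\|_1$ from the previous step, and dividing by $\|\hat u_N\|_{L^2}$ gives $\|\hat u_N\|_{L^2}\le C\tau^{-1}(\|\blf\|_{L^2}+\|g\|_{L^2})$.

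Finally I set up the error equations. Regarding $(\bu_T^\ast,p^\ast)$ as an element of $\bV_\ast\times L^2_0(\Gamma)$ with zero normal part and subtracting \eqref{Stokesweak1_1}--\eqref{Stokesweak1_2} from \eqref{Stokesweak1B_1}--\eqref{Stokesweak1B_2} with test functions restricted to $\bv_T\in\bV_T$ and $q\in L^2(\Gamma)$, and using that for such $\bv_T$ one has $a_\tau(\bu,\bv_T)=a(\bu_T,\bv_T)+\int_\Gamma E_s(\bv_T):u_N\bH\,ds$ by \eqref{a_auga}, one finds that $\be_T:=\hat\bu_T-\bu_T^\ast\in\bV_T$ and $\epsilon_p:=\hat p-p^\ast\in L^2_0(\Gamma)$ solve the surface Stokes problem $a(\be_T,\bv_T)+b_T(\bv_T,\epsilon_p)=-\int_\Gamma E_s(\bv_T):\hat u_N\bH\,ds$ for all $\bv_T\in\bV_T$ and $b_T(\be_T,q)=0$ for all $q\in L^2(\Gamma)$. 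The functional $\bv_T\mapsto-\int_\Gamma E_s(\bv_T):\hat u_N\bH\,ds$ has $\bV_T$-dual norm at most $\|\bH\|_{L^\infty(\Gamma)}\|\hat u_N\|_{L^2}$, so the $\tau$-independent coercivity \eqref{normeq} (valid under Assumption~\ref{A1}) and inf-sup stability \eqref{infsup} of this Stokes problem give $\|\be_T\|_1+\|\epsilon_p\|_{L^2}\le C\|\hat u_N\|_{L^2}$; combined with the previous step this is exactly \eqref{EstAug}.

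The routine parts are the verification of the Brezzi hypotheses and the Cauchy--Schwarz bookkeeping. The one genuinely load-bearing step is testing with the purely normal field $\hat u_N\bn$, which is what turns the coercive penalty $\tau\|\hat u_N\|_{L^2}^2$ into $O(\tau^{-1})$ decay; and the point requiring care throughout is to use only the stability constants from \eqref{normeq1}, \eqref{infsup} and \eqref{normeq}, all of which are independent of $\tau$, so that the final constant depends only on $\Gamma$.
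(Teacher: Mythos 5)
Your proposal is correct and follows essentially the same route as the paper: well-posedness via the $\tau$-uniform coercivity \eqref{normeq1} and the inf-sup property inherited from $\bV_T^0\subset\bV_\ast$, the key $O(\tau^{-1})$ bound on $\hat u_N$ by testing with $\hat u_N\bn$, and then the tangential and pressure error bounds from the error equation via \eqref{normeq} and \eqref{infsup}. The only differences are cosmetic (you expand $E_s(\hat\bu)$ via \eqref{a_auga} and drop the nonnegative $\|\bH\|^2$-term where the paper keeps $E_s(\hat\bu)$ whole, and you package the last step as stability of the error Stokes system rather than doing the velocity and pressure bounds in two explicit sub-steps).
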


\begin{proof}
The bilinear form $a_\tau(\cdot,\cdot)$ is continuous and elliptic on $\bV_\ast$, as shown in~\eqref{normeq1}.
 The uniform in $\tau $ inf-sup property and continuity for $b_T(\cdot,\cdot)$ on
 $\bV_\ast \times L_0^2(\Gamma)$ immediately follow from \eqref{infsup}, the embedding $ \bV_T^0\subset \bV_\ast$ and the property $b_T(\bv,q)=b_T(\bv_T,q)$.
 Therefore,  problem \eqref{Stokesweak1B_1}-\eqref{Stokesweak1B_2}
 is well posed and the following a priori estimate holds
 \begin{equation}\label{aux1b}
 \|\hat\bu\|_{V_\ast}+\|\hat p\|_{L^2} \le c(\|\blf\|_{\bV_\ast'}+\|g\|_{L^2}) \le c(\|\blf\|_{L^2}+\|g\|_{L^2}),
 \end{equation}
 with some $c$ independent of  $\tau$ and $\blf,g$.

 We  test equation~\eqref{Stokesweak1B_1} with $\bv=\hat u_N\bn$. Thanks to \eqref{idfund} and $\blf \cdot \bn=0$, we obtain the identity
 \[
 \int_\Gamma  E_s( \hat\bu):\hat u_N \bH ds+\tau\|\hat u_N\|^2_{L^2}=0.
 \]
 Then, the Cauchy inequality and inequality \eqref{aux1b} lead to
\begin{equation}\label{aux1c}
\begin{split}
 \tau\|\hat u_N\|^2_{L^2}&=-\int_\Gamma  E_s( \hat\bu):\hat u_N \bH ds\le C\,\|\hat u_N\|_{L^2}\|E_s( \hat\bu)\|_{L^2}\le
 C\,\|\hat u_N\|_{L^2}\|\hat\bu\|_{V_\ast}\\& \le  C\,\|\hat u_N\|_{L^2}(\|\blf\|_{L^2}+\|g\|_{L^2}).
\end{split}
 \end{equation}
Hence, we proved the desired estimate for $ \|\hat u_N\|_{L^2}$. We now consider the term $ \|\hat\bu_T-\bu_T^\ast\|_{1}$. We take  $\bv_T:=\hat\bu_T-\bu_T^\ast$ in equations~\eqref{Stokesweak1_1}-\eqref{Stokesweak1_2}
and \eqref{Stokesweak1B_1}-\eqref{Stokesweak1B_2}.
From the divergence equations in \eqref{Stokesweak1_2} and \eqref{Stokesweak1B_2} we obtain $b_T(\bv_T,q)=0$ for all $q \in L^2(\Gamma)$.  Taking $\bv=\bv_T$ in \eqref{Stokesweak1_1} and \eqref{Stokesweak1B_1} and using $b_T(\bv_T,q)=0$ for all $q \in L^2(\Gamma)$ we get
\[
  a(\bu_T^\ast,\bv_T)-a_\tau(\hat \bu, \bv_T)=0,
\]
and using {\eqref{a_auga}} we obtain
\[
  a(\bv_T,\bv_T)= -\int_\Gamma E_s(\bv_T) : \hat u_N \bH \, ds.
\]
From this and \eqref{normeq} we conclude
 \begin{align}
  \|\bu_T^\ast-\hat \bu_T\|_{1}^2 &\le c\, a(\bv_T,\bv_T)
 \le c \|\bu_T^\ast-\hat \bu_T\|_{1} \|\hat{u}_N\|_{L^2} . \label{est2}
 \end{align}
Hence, $ \|\bu_T^\ast-\hat \bu_T\|_{1} \leq c  \|\hat{u}_N\|_{L^2} \leq c \tau^{-1}(\|\blf\|_{L^2}+\|g\|_{L^2})$ holds, which is the desired estimate for $ \|\hat\bu_T-\bu_T^\ast\|_{1}$.
Finally, the estimate for $\|\hat p-p^\ast\|_{L^2}$ follows from inequalities
\eqref{infsup}, \eqref{aux1c}, and \eqref{est2}:
 \begin{equation*}
 \begin{split}
c\,\|p^\ast-\hat p\|_{L^2}&\le \sup_{\bv_T\in{\bV_T^0}}\frac{b_T(\bv_T,p^\ast- \hat p)}{\|\bv_T\|_1}= \sup_{\bv_T\in{\bV_T^0}} \frac{a(\bu_T^\ast,\bv_T)-a_\tau(\hat \bu,\bv_T)}{\|\bv_T\|_1}\\
&= \sup_{\bv_T\in{\bV_T^0}}\frac{a(\bu_T^\ast-\hat\bu_T,\bv_T)- \int_\Gamma E_s( \bv_T): \hat u_N \bH\,ds}{\|\bv_T\|_1} \\&\le C(\|\hat\bu_T-\bu_T^\ast\|_{1} +\|\hat{u}_N\|_{L^2})\le C\,\tau^{-1}(\|\blf\|_{L^2}+\|g\|_{L^2}).
\end{split}
\end{equation*}
This concludes the proof.
\end{proof}

\medskip

If one is interested in a finite element method of order $m$ for the surface Stokes problem
\eqref{strongform-1}-\eqref{strongform-2},
then Theorem~\ref{Th_aug} suggests to use weak formulation
\eqref{Stokesweak1B_1}-\eqref{Stokesweak1B_2} with penalty parameter $\tau=O(h^{-m})$.
For the particular choice of $P_1$--$P_1$ elements used in this paper, this motivates
\begin{equation}\label{tau_h}
\tau= c_\tau h^{-2}
\end{equation}
with some $c_\tau$ depending only on $\Gamma$.

We conclude by stressing again that the weak formulation \eqref{Stokesweak1B_1}-\eqref{Stokesweak1B_2}
gives a numerical advantage over formulation \eqref{Stokesweak1_1}-\eqref{Stokesweak1_2}
by not forcing the use of tangential finite element vector fields.

\section{Trace Finite Element Method}\label{s_TraceFEM}
For the discretization of the variational problem \eqref{Stokesweak1B_1}-\eqref{Stokesweak1B_2}
we extend the trace finite element approach (TraceFEM) introduced in \cite{ORG09}
for elliptic equations on surfaces. In this section, we
present and analyze the method.

Let $\Omega \subset \R^3$ be a fixed polygonal domain  that strictly contains $\Gamma$.
We consider a family of shape regular tetrahedral triangulations $\{\T_h\}_{h >0}$ of $\Omega$.
The subset of tetrahedra that have a nonzero intersection with $\Gamma$ is collected in the set
denoted by $\T_h^\Gamma$. For the analysis of the method,  we assume $\{\T_h^\Gamma\}_{h >0}$ to be quasi-uniform.
However, in practice adaptive mesh refinement is possible, as discussed, for example, in \cite{DemlowOlsh,chernyshenko2015adaptive}.
The domain formed by all tetrahedra in $\T_h^\Gamma$ is denoted by $\OGamma:=\text{int}(\overline{\cup_{T \in \T_h^\Gamma} T})$.
On $\T_h^\Gamma$ we use a standard finite element space of continuous functions that are piecewise-affine functions.
In this paper we focus on $P_1$ elements, i.e. polynomials of degree $1$.
This so-called \emph{bulk finite element space} is denoted by $V_h$.

Since the $P_1$--$P_1$ pair for velocity and pressure is not inf-sup stable,
a stabilization term is added to the finite element (FE) formulation (see below).
For this extra term, we need an extension of the normal vector field $\bn$
from $\Gamma$ to $\OGamma$, denoted with $\bn^e$.
We choose $\bn^e = \nabla d$, where $d$ is the signed distance function to $\Gamma$.
In practice, $d$ is often not available and thus we use approximations,
as discussed in Remark~\ref{remimplementation}.
Another implementation aspect of TraceFEM that requires attention is the computation of integrals over the surface $\Gamma$ with sufficiently high  accuracy.
 In practice, $\Gamma$ can be defined implicitly as the zero level
of a level set function and a parametrization of $\Gamma$ may not be available.
{An easy way to compute approximation $\Gamma_h \approx \Gamma$ and the corresponding geometric errors} will also be discussed in Remark~\ref{remimplementation}. Below
we use the exact extended normal $\bn=\bn^e$ and we \emph{assume exact integration over} $\Gamma$.

Consider  the spaces
\[ \bU:=\{\, \bv \in H^1(\OGamma)^3~|~\bv|_{\Gamma} \in \bV_\ast\,\}, \quad Q:=H^1(\OGamma).
\]
Our velocity and pressure finite element spaces are $P_1$ continuous FE spaces on $\OGamma$:
\[
\bU_h:= (V_h)^3\subset \bU, \quad Q_h :=V_h\cap L^0_2(\Gamma)\subset Q.
\]
We introduce the following finite element bilinear forms:
\begin{align}
 A_h(\bu,\bv) &:= a_\tau(\bu,\bv) + \rho_u \int_{\OGamma} (\nabla \bu \bn)\cdot (\nabla  \bv \bn) \, dx, \qquad \bu,\bv \in \bU, \label{Ah} \\
s_h(p,q)& := \rho_p  \int_{\OGamma} \nabla p\cdot\nabla q \, dx,     \qquad p,q \in Q. \label{sh}
\end{align}
The volumetric term in the definition of $A_h$ is the so called \emph{volume normal derivative} stabilization
first introduced  in \cite{burman2016cutb,grande2017higher} in the context of  TraceFEM
for the scalar Laplace--Beltrami problem on a surface. The term vanishes for the strong solution $\bu$ of
equations~\eqref{strongform-1}-\eqref{strongform-2}, since one can always assume a normal extension of $\bu$ off the surface.
The purpose of this additional term is to stabilize the \emph{resulting algebraic system} against possible
instabilities produced by the small cuts of the background triangulation by the surface.
Indeed, if one uses a natural nodal basis in $V_h$, then small cuts of background tetrahedra may lead to
(arbitrarily) small diagonal entries in the resulting matrices.
The stabilization term in \eqref{Ah} eliminates this problem because it
allows to get control over the $L^2(\OGamma)$-norm of $\bv_h\in\bU_h$
by the problem induced norm $A_h(\bv_h,\bv_h)^{\frac12}$ for a suitable choice of $\rho_u$. We note that other efficient stabilization techniques exist; see~\cite{burman2016cutb} and  the review in~\cite{olshanskii2016trace}.

The role of $s_h$ defined in \eqref{sh} is twofold.
First, it stabilizes the nodal basis in the pressure space $Q_h$ with respect to small element cuts,
in the same way as the volumetric term in \eqref{Ah} does this for velocity.
Then, it stabilizes the velocity--pressure pair against the violation of the inf-sup condition (the discrete counterpart of \eqref{infsup}). For the latter, the $s_h$ stabilization resembles
the well-known  Brezzi--Pitk\"{a}ranta stabilization ~\cite{BP} for the planar Stokes $P_1$--$P_1$ finite elements.
Both roles are clearly seen from the decomposition:
\begin{equation}\label{form_s}
s_h(p,q) =  \underbrace{\rho_p\int_{\OGamma}  \frac{\partial p}{\partial\bn} \frac{\partial q}{\partial\bn} \, dx}_{\text{normal stabilization}}~+  \underbrace{\rho_p  \int_{\OGamma} \nablaG p\,\nablaG q \, dx}_{\text{Brezzi--Pitk\"{a}ranta stabilization}}.
\end{equation}

The analysis of the scalar Laplace--Beltrami surface equation~\cite{burman2016cutb,grande2017higher}
and vector surface Laplacians~\cite{gross2017trace} suggest that optimal convergence and algebraic stability
should be expected
for a wide range of the normal stabilization parameter,
$h \lesssim \rho_u \lesssim h^{-1}$.
In this paper, we introduce the minimal suitable stabilization and set $\rho_p\simeq\rho_u\simeq h$.
Here and further in the paper we write $x\lesssim y$ to state that the inequality  $x\le c y$
holds for quantities $x,y$ with a constant $c$, which is independent of the mesh parameter
$h$ and the position of $\Gamma$ over the background mesh.
Similarly for $x\gtrsim y$, and $x\simeq y$
will mean that both $x\lesssim y$ and $x\gtrsim y$ hold.
Note that a $\rho_p\simeq h$ scaling is consistent with the well-known $O(h^2)$ choice of the Brezzi--Pitk\"{a}ranta stabilization parameter in the usual (planar or volumetric) case. The additional $O(h)$ scaling comes from the fact that the second term in \eqref{form_s} is computed over the narrow volumetric domain rather than over the surface.

\smallskip

The trace finite element method (TraceFEM) we use reads as follows:
Find $(\bu_h, p_h) \in \bU_h \times Q_h$ such that
\begin{equation} \label{discrete}
 \begin{aligned}
  A_h(\bu_h,\bv_h) + b_T(\bv_h,p_h) & =(\blf,\bv_h)_{L^2} &\quad &\text{for all } \bv_h \in \bU_h \\
  b_T(\bu_h,q_h)-s_h(p_h,q_h) & = (g,q_h)_{L^2} &\quad &\text{for all }q_h \in Q_h,
 \end{aligned}
\end{equation}
with the following setting for the parameters:
\begin{equation} \label{parameters}
\tau= c_\tau h^{-2},\quad \rho_p=c_p h, \quad \rho_u=c_u h.
\end{equation}
Here $h$ is the characteristic mesh size of the background tetrahedral mesh, while
$c_\tau$, $c_p$, $c_u$ are some $O(1)$ tunable constants. The optimal value
of those constants may depend on problem data such as $\Gamma$, but is
independent of $h$ and of how $\Gamma$ cuts through the background mesh.
The decomposition \eqref{form_s} suggests that one can split $s_h(\cdot,\cdot)$ into
two parts and use different scalings for the normal and tangential terms.
For simplicity of the method, we avoid this option.

\begin{remark} \label{remimplementation}
 \rm We discuss some implementation aspects of the trace finite element discretization \eqref{discrete}. In the bilinear form $A_h(\bu_h,\bv_h)$ only full gradients (no tangential ones) of the arguments are needed.  These can be computed as in standard finite element methods. It is important for the implementation that in $A_h(\cdot,\cdot)$ we do \emph{not} need derivatives of projected velocities, e.g. of $(\bu_h)_T$. In the bilinear form $b_T(\bv_h,p_h) = - \int_\Gamma p_h\,\divG (\bv_h)_T \, ds$, however, derivatives of the tangential velocity $(\bv_h)_T= \bP\bv_h$, with $\bP:=\bI- \bn \bn^T$, appear. This requires a differentation of $\bP$ and thus a sufficiently accurate curvature approximation. In a setting of $H^1$-conforming pressure finite element spaces, as used in this paper, it is convenient to rewrite the bilinear form as $b_T(\bv_h,p_h) =  \int_\Gamma \nabla_\Gamma p_h \cdot \bv_h \, ds= \int_\Gamma (\bP \nabla p_h )\cdot \bv_h \, ds  $. Implementation then only requires an approximation of
$\bn_h \approx \bn$ and not of derivatives of $\bn$. \\
As noted above, in the implementation of this method one typically replaces $\Gamma$
 by an approximation $\Gamma_h \approx \Gamma$ such that integrals over $\Gamma_h$
 can be efficiently computed. Furthermore, the exact normal $\bn$ is approximated by  $\bn_h \approx \bn $.
 In the literature on finite element methods for surface PDEs, this is standard practice.
 We will use a piecewise planar surface approximation $\Gamma_h$ with ${\rm dist}(\Gamma,\Gamma_h) \lesssim h^2$.
 If one is interested in surface FEM with higher order surface approximation, we refer to the recent paper \cite{grande2017higher}.
We assume a level set representation of $\Gamma$:
 \[
 \Gamma=\{\bx\in\mathbb{R}^3\,:\,\phi(\bx)=0\},
 \]
with some smooth function $\phi$ such that $|\nabla\phi|\ge c_0>0$ in a neighborhood of $\Gamma$.
 For the numerical experiments in section~\ref{sectExp} we use a piecewise planar surface approximation:
\[
\Gamma_h=\{\bx\in\mathbb{R}^3\,:\, I_h(\phi(\bx))=0\},
\]
where $I_h(\phi(\bx))\in V_h$ is the nodal interpolant of $\phi$.
 As for the  construction of suitable normal approximations $\bn_h \approx \bn$,
 several techniques are available in the literature.
 One possibility is to use $\bn_h(\bx)=\frac{\nabla \phi_h(\bx)}{\|\nabla \phi_h(\bx)\|_2}$,
 where $\phi_h$ is a finite element approximation of a level set function $\phi$ which characterizes $\Gamma$.
 This is technique we use in section~\ref{sectExp}, where $\phi_h$ is defined as a $P_2$ nodal interpolant of $\phi$.
  Analyzing the effect of such geometric errors is beyond the scope of this paper.
Our focus is on analyzing the TraceFEM \eqref{discrete}.
\end{remark}

\begin{remark} \rm An alternative numerical approach to enforce the tangentiality constraint on the flow field $\bu$ is to introduce a Lagrange multiplier in \eqref{discrete} instead of using a  penalty approach. This adds extra Lagrange multiplier unknowns to the algebraic system, but removes  the augmentation parameter $\tau$. For a surface vector-Laplace problem  the TraceFEM with such an enforcement of $\bu\cdot\bn=0$  was studied in \cite{gross2017trace}. For $P_1$ velocity elements one can use a $P_1$ Lagrange multiplier space for a numerically stable and optimally accurate TraceFEM. {A systematic comparison of the penalty approach presented in this paper and such a Lagrange multiplier technique for the surface Stokes problem is a topic for future research.}
\end{remark}

\section{Error analysis of TraceFEM}\label{s_error}
In this section we present stability and error analysis of the finite element method \eqref{discrete}.
After some preliminaries we derive a discrete inf-sup result and discuss the consistency between the FE formulation and the original problem. We then prove an $O(h)$ error  estimate in the natural energy norm and an $O(h^2)$ error estimate in the surface $L^2$-norm for velocity.
\subsection{Preliminaries}
In this section we collect a few results that we need in the error analysis. The parameters
in the bilinear forms are set as in \eqref{parameters}.
We introduce the norms:
\begin{align}
 \|\bv\|_U^2 & :=A_h(\bv,\bv), \quad \bv \in \bU, \label{defUh} \\
  \|q\|_Q^2 &:= \|q\|_{L^2}^2 + h \|\nabla q\|_{L^2(\OGamma)}^2, \quad q \in Q. \label{defmuast}
\end{align}
In these norms we easily obtain  continuity estimates.
The  Cauchy-Schwarz inequality and the definition of the norms immediately yield the following estimates:
\begin{align}
 A_h(\bu,\bv) & \leq \|\bu\|_U\|\bv\|_U \quad \text{for all }\bu,\bv \in \bU, \label{cont1}\\
 b_T(\bu,q) & \lesssim  \|\bu\|_U \|q\|_Q \quad \text{for all }\bu \in \bU, q \in Q. \label{cont2}\\
 s_h(p,q) & \lesssim  \|p\|_Q \|q\|_Q \quad \text{for all } p,\,q \in Q. \label{cont3}
\end{align}
On the finite element spaces $\bU_h$, $Q_h$ the norms $\|\cdot\|_U$ and $\|\cdot\|_Q$ are \emph{uniformly} equivalent to certain (scaled) $L^2$ and $H^1$ norms. The uniformity holds with respect to $h$ and the position of $\Gamma$ in the background mesh. We recall a result known from the literature
for scalar finite element function $v_h \in V_h$:
 \begin{equation} \label{fund1}
  h \|v_h\|_{L^2}^2 + h^2 \|\bn \cdot \nabla v_h\|_{L^2(\OGamma)}^2 \simeq \|v_h\|_{L^2(\OGamma)}^2\quad \text{for all}~~v_h \in V_h.
\end{equation}
A proof of the $\gtrsim$ estimate in \eqref{fund1} is given in \cite{grande2017higher},
while the $\lesssim$ estimate follows from the inequality, cf.~\cite{Hansbo02}:
\begin{equation} \label{fund1B}
 h \|v\|_{L^2(\Gamma\cap K)}^2 \lesssim  \|v\|_{L^2(K)}^2+h^2\|v\|_{H^1(K)}^2 \quad \text{for all}~v \in H^1(K),~K\in\T_h^\Gamma,
\end{equation}
and a standard finite element inverse inequality $\|v_h\|_{H^1(\OGamma)} \lesssim h^{-1}\|v_h\|_{L^2(\OGamma)}$ for all $v_h \in V_h$.
Using \eqref{fund1}, \eqref{normeq1} and \eqref{parameters} we get  for vector functions $\bv=(v_1,v_2,v_3) \in \bU_h$:
\begin{align*}
 A_h(\bv,\bv) & \simeq \|\bv\|_{V_\ast}^2 + h \sum_{i=1}^3 \|\bn \cdot \nabla v_i\|_{L^2(\OGamma)}^2 \\
  & \simeq \|\bv_T\|_1^2 + h^{-2}\|v_N\|_{L^2}^2 + \sum_{i=1}^3 \big( \|v_i\|_{L^2}^2 + h \|\bn \cdot \nabla v_i\|_{L^2(\OGamma)}^2\big ) \\
 & \simeq \|\bv_T\|_1^2 + h^{-2}\|v_N\|_{L^2}^2 + h^{-1} \|\bv\|_{L^2(\OGamma)}^2,
\end{align*}
i.e.:
\begin{equation} \label{equiv1}
\|\bv\|_U^2\simeq \|\bv_T\|_1^2 +h^{-2}\|v_N\|_{L^2}^2 + h^{-1} \|\bv\|_{L^2(\OGamma)}^2 \quad  \text{for all}~~\bv\in\bU_h.
\end{equation}
From  \eqref{fund1} and finite element inverse inequalities  we also obtain:
\begin{equation} \label{fundC}
\|q\|_Q\simeq  h^{-\frac12} \|q\|_{L^2(\OGamma)}\quad \text{for all}~~ q \in Q_h.
\end{equation}

\subsection{Discrete inf-sup property}
Based on the inf-sup property \eqref{infsup} for the continuous problem and a perturbation argument we derive a discrete inf-sup result. In the analysis, for a scalar function $v \in L^2(\Gamma)$ we use a \emph{constant extension along normals} denoted by $v^e$, which is defined in a fixed sufficiently small neighborhood of $\Gamma$ that contains (for $h$ sufficiently small) the local triangulation $\OGamma$,  cf.~\cite{DEreview}.
For $v^e$ the following estimates hold \cite{reusken2015analysis}:
 \begin{equation}\label{normal}
\begin{split}
 h^\frac{1}{2}\|\nablaG v\|_{L^2}& \simeq \|\nabla v^e\|_{L^2(\OGamma)}, \quad \text{for all}~~v \in H^1(\Gamma), \\
h^{\frac12}\|v\|_{L^2}& \simeq \|v^e\|_{L^2(\OGamma)}, \quad \text{for all}~~v \in L^2(\Gamma),\\
  \|v^e\|_{H^2(\OGamma)} &\lesssim h^{\frac12} \|v\|_{H^2(\Gamma)}, \quad \text{for all}~~v \in H^2(\Gamma).
\end{split}
 \end{equation}
The componentwise constant extension along normals of a vector function $\bv$ is denoted by $\bv^e$.
Applying the first estimate in \eqref{normal} componentwise we also get for all $\bv \in H^1(\Gamma)^3$:
  \begin{equation}\label{normalv}
h^{\frac12}\|\nabla\bv\bP\|_{L^2}  \simeq \|\nabla\bv^e\|_{L^2(\OGamma)}, \text{ and thus}\quad \|\nabla\bv^e\|_{L^2(\OGamma)}\lesssim h^{\frac12}\|\bv\|_1.
 \end{equation}
\begin{lemma} \label{lemdiscreteinfsup}
If the  constant $c_p$ in \eqref{parameters} is taken sufficiently large (independent of $h$ and of how $\Gamma$ intersects the background mesh) then the following holds:
 \begin{equation} \label{infsupest}
  \sup_{\bv_h \in \bU_h} \frac{b_T(\bv_h, q_h)}{\|\bv_h\|_U} + s_h(q_h,q_h)^\frac12 \gtrsim\| q_h\|_Q \quad \text{for all}~~ q_h \in Q_h.
 \end{equation}
\end{lemma}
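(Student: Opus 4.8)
The plan is to transfer the continuous inf-sup property \eqref{infsup} to the discrete spaces by a Fortin-type argument combined with the normal-extension estimates in \eqref{normal}--\eqref{normalv}. Fix $q_h \in Q_h$. Since $q_h \in L^2_0(\Gamma)$, inequality \eqref{infsup} provides a tangential field $\bw_T \in \bV_T^0$ with $b_T(\bw_T,q_h) \gtrsim \|q_h\|_{L^2}\|\bw_T\|_1$ and $\|\bw_T\|_1 \simeq \|q_h\|_{L^2}$. This $\bw_T$ is not a finite element function, so I would use a quasi-interpolant. First I would take the componentwise constant normal extension $\bw_T^e$ onto $\OGamma$ and then apply a Scott--Zhang (or Clément) interpolation operator $I_h$ into $\bU_h$. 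Using standard interpolation error bounds on $\OGamma$ together with \eqref{normalv}, I get $\|I_h \bw_T^e\|_{L^2(\OGamma)} \lesssim \|\bw_T^e\|_{L^2(\OGamma)} \lesssim h^{\frac12}\|\bw_T\|_{L^2} \lesssim h^{\frac12}\|q_h\|_{L^2}$, and similarly $\|\nabla(I_h\bw_T^e)\|_{L^2(\OGamma)} \lesssim h^{\frac12}\|\bw_T\|_1$. Combining these with the norm equivalence \eqref{equiv1} (noting $\bv_h := I_h\bw_T^e$ has small normal component because $\bw_T^e\cdot\bn = 0$ on $\Gamma$ and the interpolation perturbation of the normal component is controlled) gives $\|\bv_h\|_U \lesssim \|q_h\|_{L^2}$.

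The core estimate is then to bound $b_T(\bv_h, q_h)$ from below. I would write $b_T(\bv_h,q_h) = b_T(\bw_T^e, q_h) + b_T(\bv_h - \bw_T^e, q_h)$, where the integral is over $\Gamma$ so only the trace of $\bv_h - \bw_T^e$ on $\Gamma$ enters. The first term is essentially $b_T(\bw_T,q_h)$ up to the fact that on $\Gamma$ the restriction of $\bw_T^e$ equals $\bw_T$, so in fact $b_T(\bw_T^e,q_h) = b_T(\bw_T,q_h) \gtrsim \|q_h\|_{L^2}^2$. For the second term I would use the representation $b_T(\bv,q_h) = \int_\Gamma (\bP\nabla q_h)\cdot\bv\,ds$ from Remark~\ref{remimplementation}, apply Cauchy--Schwarz on $\Gamma$, and control $\|\bv_h - \bw_T^e\|_{L^2(\Gamma)}$ by the trace inequality \eqref{fund1B} and the interpolation bound, yielding something of order $h^{\frac12}\|\bv_h - \bw_T^e\|_{\text{(interpolation error)}} \cdot \|\nablaG q_h\|_{L^2}$. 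The factor $\|\nablaG q_h\|_{L^2}$ is not controlled by $\|q_h\|_{L^2}$ alone, but it is exactly the quantity appearing in $s_h(q_h,q_h)^{\frac12} \simeq \rho_p^{\frac12}\|\nabla q_h\|_{L^2(\OGamma)} \simeq h^{\frac12}\|\nabla q_h\|_{L^2(\OGamma)}$ by \eqref{form_s}. So this perturbation term is bounded by $C\, h^{?}\|q_h\|_{L^2}\,\|\nabla q_h\|_{L^2(\OGamma)} \lesssim \epsilon\|q_h\|_{L^2}^2 + C_\epsilon s_h(q_h,q_h)$, and absorbing $\epsilon\|q_h\|_{L^2}^2$ into the leading term (for $\epsilon$ small) gives $b_T(\bv_h,q_h) + s_h(q_h,q_h) \gtrsim \|q_h\|_{L^2}^2$.

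Finally I would upgrade from $\|q_h\|_{L^2}$ to the full norm $\|q_h\|_Q$. By \eqref{defmuast} and \eqref{form_s} we have $\|q_h\|_Q^2 = \|q_h\|_{L^2}^2 + h\|\nabla q_h\|_{L^2(\OGamma)}^2 \simeq \|q_h\|_{L^2}^2 + s_h(q_h,q_h)$ (up to the constant $c_p$). Dividing the estimate $b_T(\bv_h,q_h) \gtrsim \|q_h\|_{L^2}^2 - C\,s_h(q_h,q_h)$ by $\|\bv_h\|_U \lesssim \|q_h\|_{L^2} \le \|q_h\|_Q$ and adding $s_h(q_h,q_h)^{\frac12}$ to both sides, then using $\|q_h\|_{L^2}^2 + s_h(q_h,q_h) \gtrsim \|q_h\|_Q^2$, yields \eqref{infsupest}.

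The main obstacle I expect is the careful bookkeeping of the perturbation term $b_T(\bv_h - \bw_T^e, q_h)$: one must check that all the $h$-powers from interpolation error, the trace inequality \eqref{fund1B}, the normal extension estimate \eqref{normal}, and the scaling $\rho_p \simeq h$ combine so that this term is genuinely bounded by $\epsilon\|q_h\|_{L^2}^2 + C_\epsilon s_h(q_h,q_h)$ with a constant $C_\epsilon$ independent of $h$ and of the cut position — this is precisely where the requirement that $c_p$ be taken sufficiently large enters. A secondary point requiring care is that $\bv_h = I_h\bw_T^e$ has a small but nonzero normal component $ (\bv_h)_N$ on $\Gamma$, which must be shown to contribute $O(h)$ relative to $\|q_h\|_{L^2}$ so that the $h^{-2}\|v_N\|_{L^2}^2$ term in \eqref{equiv1} stays under control; this follows from the approximation $\|(\bv_h - \bw_T^e)\cdot\bn\|_{L^2(\Gamma)} \lesssim h\|\bw_T\|_1$ plus $\bw_T^e\cdot\bn|_\Gamma = 0$.
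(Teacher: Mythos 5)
Your proposal is correct and follows essentially the same route as the paper's proof: use the continuous inf-sup property \eqref{infsup} to obtain a tangential field, extend it normally, interpolate with a Cl\'ement-type operator into $\bU_h$, bound $\|\bv_h\|_U$ via \eqref{equiv1} and the interpolation/extension estimates, and absorb the resulting perturbation term $h^{1/2}\|q_h\|_{L^2}\|\nabla q_h\|_{L^2(\OGamma)}$ into $s_h(q_h,q_h)$ by taking $c_p$ sufficiently large. The only differences are cosmetic (your Young-inequality absorption versus the paper's direct division by $\|\bv\|_1$ and the choice $c_p \ge \tilde c^2$).
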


\begin{proof} Take $ q_h \in Q_h$. Thanks to the inf-sup property \eqref{infsup}, there exists
$\bv\in\bV_T$ such that
\begin{equation} \label{aux10}
  b_T( \bv,  q_h)= \| q_h\|_{L^2}^2,\qquad c_0\|\bv\|_1\le \| q_h\|_{L^2}.
\end{equation}
Let  $\bv^e$ be the normal  extension of $\bv$ and take  $\bv_h:= I_h ( \bv^e) \in \bU_h$, where $I_h: H^1(\mathcal{O}(\Gamma))^3\to \bU_h$ is the Cl\'ement interpolation operator, with $\mathcal{O}(\Gamma)$  a  neighborhood of $\Gamma$ that contains $\OGamma$ and of width $\mathcal{O}(h)$.
Based on \eqref{fund1B}, approximation properties of $I_h ( \bv^e)$, and \eqref{normal}--\eqref{normalv} one gets
by standard arguments (see, e.g., \cite{reusken2015analysis}):
\begin{equation}\label{aux1d}
 \|\bv - I_h ( \bv^e)\|_{L^2}+h\|\nablaG(\bv-I_h ( \bv^e))\|_{L^2}\lesssim h\|\nablaG\bv\|_{L^2}.
\end{equation}

Due to \eqref{equiv1}, \eqref{aux1d}, $v_N=0$ (since $\bv\in\bV_T$), \eqref{fund1B}, and \eqref{normalv}, we have
\begin{equation*}
\begin{split}
 \|\bv_h\|_U&=\|I_h ( \bv^e)\|_U\\
{\footnotesize \eqref{equiv1}}~ & \simeq \|I_h ( \bv^e)_T\|_1+h^{-1}\|I_h ( \bv^e)_N\|_{L^2}+ h^{-\frac12} \|I_h ( \bv^e)\|_{L^2(\OGamma)}\\
{\footnotesize v_N=0,\,\eqref{fund1B}}~  &\lesssim h^{-\frac12}\|I_h ( \bv^e)\|_{H^1(\OGamma)}+h^{-1}\|I_h ( \bv^e)-\bv)\cdot\bn\|_{L^2}\\
    & \lesssim h^{-\frac12}\|\bv^e\|_{H^1(\mathcal{O}(\Gamma))} + h^{-1} \|I_h ( \bv^e)-\bv^e\|_{L^2} \\
{\footnotesize \eqref{fund1B}}~  &\lesssim h^{-\frac12}\|\bv^e\|_{H^1(\mathcal{O}(\Gamma))}+h^{-\frac32}\|I_h ( \bv^e)-\bv^e\|_{L^2(\OGamma)} \\ & \qquad +
 h^{-\frac12}\|I_h ( \bv^e)-\bv^e\|_{H^1(\OGamma)}\\
{\footnotesize \eqref{aux1d}}~ &\lesssim h^{-\frac12}\|\bv^e\|_{H^1(\mathcal{O}(\Gamma))}\\
{\footnotesize \eqref{normalv}}~ &\lesssim \|\bv\|_1.
 \end{split}
\end{equation*}
Hence, we proved
\begin{equation} \label{aux1U}
 \|\bv_h\|_U\lesssim \|\bv\|_1.
\end{equation}

Now note, that due to \eqref{aux1d}, \eqref{aux10}, \eqref{normal},  we have
\begin{equation*} 
\begin{split}
b_T(\bv_h,  q_h)  & = b_T( \bv,  q_h)- b_T(\bv - I_h ( \bv^e), q_h) \\
&=\| q_h\|_{L^2}^2 - b_T(\bv - I_h ( \bv^e), q_h) \\
&=\| q_h\|_{L^2}^2 + (\bv - I_h ( \bv^e), \gradG q_h)_{L^2}\\
&\ge  \| q_h\|_{L^2}^2 - \|\bv - I_h ( \bv^e)\|_{L^2} \|\gradG q_h\|_{L^2}\\
&\ge  \| q_h\|_{L^2}^2 - c\,h \|\bv\|_1 \|\gradG q_h\|_{L^2}\\
& \ge  \| q_h\|_{L^2}^2 - c\,h^{\frac12} \|\bv\|_1 \|\nabla q_h\|_{L^2(\OGamma)}.
\end{split}
\end{equation*}
Dividing both sides by $\|\bv\|_1$ and using the results in \eqref{aux10} and \eqref{aux1U} we get, for suitable constants $\tilde c_0 >0$ and $\tilde c$:
\[
  \frac{b_T(\bv_h,  q_h) }{\|\bv_h\|_U} \geq\tilde c_0   \|q_h\|_{L^2}- \tilde c h^\frac12 \|\nabla q_h\|_{L^2(\OGamma)}.
\]
Hence, for $c_p \geq \tilde c^2$ we obtain the estimate \eqref{infsupest}.
\end{proof}
\ \\
\begin{assumption}\label{A3} In the remainder we assume that $c_p$ in \eqref{parameters} is taken sufficiently large such that the discrete inf-sup estimate \eqref{infsupest}  holds.
\end{assumption}

\subsection{Consistency}
For the error analysis it is convenient to introduce the bilinear form
\begin{equation} \label{defk}
 \bA_h\big((\bu,p),(\bv,q)\big):= A_h(\bu,\bv)+ b_T(\bv,p) +b_T(\bu,q) - s_h(p,q),
\end{equation}
for $(\bu,p), (\bv,q) \in \bU \times Q$. Then,
the discrete problem \eqref{discrete} has the compact representation:
Determine $(\bu_h,p_h) \in \bU_h \times Q_h$ such that
\begin{equation} \label{discrete1}
 \bA_h\big((\bu_h,p_p),(\bv_h,q_h)\big)=(\blf,\bv_h)_{L^2} + (g,q_h)_{L^2} \quad \text{for all}~~(\bv_h,q_h) \in \bU_h \times Q_h.
\end{equation}
Due to \eqref{infsupest} discrete problem \eqref{discrete1} has a unique solution, which is denoted by $(\bu_h,p_h)$.
Below we derive a consistency relation of the unique solution $(\bu_T^\ast, p^\ast)$ of \eqref{Stokesweak1_1}-\eqref{Stokesweak1_2}. To this purpose, we need the normal extensions
$(\bu_T^\ast)^e$ and  $(p^\ast)^e$ of $\bu_T^\ast$ and $ p^\ast$, respectively. To simplify the notation these extensions are also denoted by $\bu_T^\ast$ and  $p^\ast$.
\begin{lemma} \label{lemconsistency}
Let  $(\bu_T^\ast, p^\ast)$ be the unique solution of \eqref{Stokesweak1_1}-\eqref{Stokesweak1_2} and $(\bu_h,p_h)$ the unique solution of \eqref{discrete1}. The following relations hold:
\begin{equation}
 \bA_h\big((\bu_T^\ast,p^\ast)),(\bv,q)\big)= (\blf,\bv)_{L^2}+(g,q)_{L^2} + \int_{\Gamma} E_s(\bu_T^\ast):v_N \bH \, ds - s_h(p^\ast,q) \label{consis1}
\end{equation}  for all $(\bv,q) \in \bU \times Q $;
\begin{equation}\label{consis2}
\bA_h\big((\bu_T^\ast - \bu_h,p^\ast- p_h)),(\bv_h,q_h)\big) = \int_{\Gamma} E_s(\bu_T^\ast):(\bn \cdot \bv_h) \bH \, ds - s_h(p^\ast,q_h)
\end{equation}
for all $(\bv_h,q_h) \in \bU_h \times Q_h$.
\end{lemma}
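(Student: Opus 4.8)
The plan is to obtain \eqref{consis1} by inserting the (normal extension of the) continuous solution $(\bu_T^\ast,p^\ast)$ directly into the form $\bA_h$ defined in \eqref{defk} and simplifying with the help of the weak equations \eqref{Stokesweak1_1}--\eqref{Stokesweak1_2}; then \eqref{consis2} is an immediate consequence obtained by subtracting the discrete equation \eqref{discrete1} and using bilinearity of $\bA_h$.

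For \eqref{consis1} I would proceed as follows. First, since the extension is taken constant along normals, i.e. $\bu_T^\ast = \bu_T^\ast\circ\bp$ in a neighborhood of $\Gamma$, one has $\nabla\bu_T^\ast\,\bn = 0$ throughout $\OGamma$; hence the volume normal derivative term in $A_h(\bu_T^\ast,\bv)$ vanishes and $A_h(\bu_T^\ast,\bv) = a_\tau(\bu_T^\ast,\bv)$. Second, the normal component of $\bu_T^\ast$ on $\Gamma$ is zero, so the representation \eqref{a_auga} applied with $u_N=0$ gives $a_\tau(\bu_T^\ast,\bv) = a(\bu_T^\ast,\bv_T) + \int_\Gamma E_s(\bu_T^\ast):v_N\bH\,ds$. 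Third, expanding $\bA_h\big((\bu_T^\ast,p^\ast),(\bv,q)\big)$ via \eqref{defk}, using $b_T(\bv,p^\ast)=b_T(\bv_T,p^\ast)$, one is left with the grouping $\big[a(\bu_T^\ast,\bv_T)+b_T(\bv_T,p^\ast)\big] + b_T(\bu_T^\ast,q) + \int_\Gamma E_s(\bu_T^\ast):v_N\bH\,ds - s_h(p^\ast,q)$. The bracketed term equals $(\blf,\bv_T)_{L^2}$ by \eqref{Stokesweak1_1}, and since $\blf\cdot\bn=0$ this coincides with $(\blf,\bv)_{L^2}$; the term $b_T(\bu_T^\ast,q)$ equals $(g,q)_{L^2}$ by \eqref{Stokesweak1_2}, which is legitimate because the restriction to $\Gamma$ of $q\in Q=H^1(\OGamma)$ lies in $L^2(\Gamma)$ and is thus an admissible test function there. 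Collecting these identities yields \eqref{consis1}.

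For \eqref{consis2} I would restrict \eqref{consis1} to test functions $(\bv_h,q_h)\in\bU_h\times Q_h$, so that the extra term reads $\int_\Gamma E_s(\bu_T^\ast):(\bn\cdot\bv_h)\bH\,ds$, and then subtract the discrete equation \eqref{discrete1}: the data terms $(\blf,\bv_h)_{L^2}+(g,q_h)_{L^2}$ cancel, and bilinearity of $\bA_h$ gives exactly \eqref{consis2}. I do not expect a genuine obstacle here — this is a Galerkin-consistency computation — the only points requiring attention being the vanishing of the volume normal derivative stabilization, which relies on using the closest-point (normal) extension on the whole strip $\OGamma$ rather than merely on $\Gamma$, and the minor check that $H^1(\OGamma)$-functions restrict to valid $L^2(\Gamma)$ test functions in \eqref{Stokesweak1_2}.
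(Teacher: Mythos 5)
Your proposal is correct and follows essentially the same route as the paper's proof: the vanishing of the volume normal derivative term for the normal extension, the identity $a_\tau(\bu_T^\ast,\bv)=a(\bu_T^\ast,\bv_T)+\int_\Gamma E_s(\bu_T^\ast):v_N\bH\,ds$ from $u_N^\ast=0$, the use of $b_T(\bv,p)=b_T(\bv_T,p)$ and $\blf\cdot\bn=0$ together with the weak equations, and then subtraction of the discrete equation for \eqref{consis2}. No gaps.
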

\begin{proof}
 Using $\nabla \bu_T^\ast\bn = \nabla (\bu_T^\ast)^e\bn=0$ and $u_N^\ast=0$ we get
\[
 A_h(\bu_T^\ast,\bv) = a_\tau(\bu_T^\ast,\bv)= a(\bu_T^\ast, \bv_T) + \int_\Gamma E_s(\bu_T^\ast):v_N \bH \, ds.
\]
Combining this with $b_T(\bv,p)=b_T(\bv_T,p)$, $\blf \cdot \bn =0$,  and \eqref{Stokesweak1_1}-\eqref{Stokesweak1_2} we obtain
\[
 A_h(\bu_T^\ast,\bv) + b_T(\bv,p^\ast)+b_T(\bu_T^\ast,q)= (\blf,\bv)_{L^2}+(g,q)_{L^2}+ \int_\Gamma E_s(\bu_T^\ast):v_N \bH \, ds
\]
for all $(\bv,q)\in U \times Q$, which using definition \eqref{defk} yields \eqref{consis1}. The relation in \eqref{consis2} directly folows from \eqref{consis1} and \eqref{discrete1}.
\end{proof}
\ \\[1ex]
The consistency relation \eqref{consis2} describes the violation of the Galerkin orthogonality, due to the stabilization term $s_h(\cdot,\cdot)$ and due to the fact that  the finite element space contains no-tangential test functions. Below we  derive bounds for the consistency error terms in the right-hand side of \eqref{consis2}.

\subsection{Discretization  error bound} We apply the standard theory of saddle point problems to
derive the error estimates in the energy norm, defined by
\[
\|\bu,p\|:=(\|\bu\|^2_\bU+\|p\|^2_Q)^\frac12.
\]
Using \eqref{cont1}-\eqref{cont3} one easily checks  that $\bA_h(\cdot,\cdot)$ is  continuous on $\bU \times Q$  with respect to this product norm.
From Lemma~\ref{lemdiscreteinfsup} and definition \eqref{defk} we get the following inf-sup stability result:
 \begin{equation} \label{infsupA}
 0<c_0\le\inf_{(\bv_h,q_h) \in \bU_h \times Q_h}\sup_{(\bv_h,q_h) \in \bU_h \times Q_h} \frac{\bA_h\big((\bu_h,p_h),(\bv_h,q_h)\big)}{\|\bu_h,p_h\|\|\bv_h,q_h\|}.
\end{equation}
The proof of \eqref{infsupA}  for $s_h(\cdot,\cdot)=0$ is given in many finite element textbooks, e.g.~\cite{ern2013theory}. The arguments have straightfoward extensions to the case $s_h\neq0$, cf., for example,  \cite{guzman2016inf}. From the discrete inf-sup property of the $\bA_h$ bilinear form and continuity we conclude  well-posedness and a stability bound:
\begin{equation} \label{bounh1}
\|\bu_h,p_h\|\lesssim \|\blf\|_{L^2}+\|g\|_{L^2}.
\end{equation}
Furthermore, we obtain the following optimal discretization error bound.
\begin{theorem} \label{thm2}  Let $(\bu_T^\ast ,p^\ast )$ be the solution of \eqref{Stokesweak1_1}-\eqref{Stokesweak1_2} and assume that $(\bu_T^\ast ,p^\ast ) \in H^2(\Gamma) \times H^1(\Gamma)$. Let  $(\bu_h,p_h) \in \bU_h\times Q_h$ be the solution  of \eqref{discrete}. The following discretization error bounds hold:
\begin{equation}\label{discrbound}
 \|\bu_T^\ast  -\bu_h,p^\ast  - p_h \|  \lesssim h (\|\bu_T^\ast \|_{2} +\|p^\ast \|_{1}).
\end{equation}
Here $\|\cdot\|_2$ denotes the $H^2(\Gamma)$ Sobolev norm.
\end{theorem}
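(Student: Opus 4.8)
The plan is to apply the standard Strang-type argument for inf-sup stable saddle point discretizations, combining the discrete inf-sup estimate \eqref{infsupA}, the continuity of $\bA_h(\cdot,\cdot)$ on $\bU \times Q$, the consistency relation \eqref{consis2}, and approximation properties of the finite element spaces. The starting point is the triangle inequality: write $\bu_T^\ast - \bu_h = (\bu_T^\ast - I_h(\bu_T^\ast)) + (I_h(\bu_T^\ast) - \bu_h)$ and similarly for the pressure, where $I_h$ denotes a suitable (Cl\'ement-type) interpolation operator into $\bU_h$, respectively $Q_h$, applied to the normal extensions. The interpolation errors $\|\bu_T^\ast - I_h(\bu_T^\ast)\|_U$ and $\|p^\ast - I_h p^\ast\|_Q$ are bounded by $C h (\|\bu_T^\ast\|_2 + \|p^\ast\|_1)$ using the norm equivalences \eqref{equiv1}, \eqref{fundC}, the scaled trace/extension estimates \eqref{normal}--\eqref{normalv}, and standard interpolation bounds; the factor $h$ (and not $h^{1/2}$, despite the $h^{-1/2}$ scalings hidden in $\|\cdot\|_U$ and $\|\cdot\|_Q$) comes out because the $H^2$-to-$H^1$ extension estimate gains an extra $h^{1/2}$. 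One must also check that the $h^{-2}\|v_N\|_{L^2}^2$ term in \eqref{equiv1} is controlled: since $\bu_T^\ast$ is tangential, $(\bu_T^\ast)^e$ has zero normal component, and the interpolation error in the normal direction is $O(h^2)$ in $L^2(\Gamma)$, giving $h^{-2} \cdot O(h^4) = O(h^2)$, which is even better than needed.

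For the discrete error $\be_h := I_h(\bu_T^\ast) - \bu_h \in \bU_h$, $\epsilon_h := I_h p^\ast - p_h \in Q_h$, I would invoke \eqref{infsupA}: there exists $(\bv_h, q_h)$ with $\|\bv_h, q_h\| = 1$ and
\[
c_0 \|\be_h, \epsilon_h\| \lesssim \bA_h\big((\be_h, \epsilon_h), (\bv_h, q_h)\big).
\]
Then split $\bA_h\big((\be_h,\epsilon_h),(\bv_h,q_h)\big) = \bA_h\big((I_h\bu_T^\ast - \bu_T^\ast, I_h p^\ast - p^\ast),(\bv_h,q_h)\big) + \bA_h\big((\bu_T^\ast - \bu_h, p^\ast - p_h),(\bv_h,q_h)\big)$. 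The first term is bounded by the interpolation error times $\|\bv_h,q_h\| = 1$ using continuity of $\bA_h$, hence by $Ch(\|\bu_T^\ast\|_2 + \|p^\ast\|_1)$. The second term is exactly the consistency defect, which by \eqref{consis2} equals $\int_\Gamma E_s(\bu_T^\ast) : (\bn \cdot \bv_h)\bH\, ds - s_h(p^\ast, q_h)$.

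The main obstacle is bounding these two consistency terms by $Ch(\|\bu_T^\ast\|_2 + \|p^\ast\|_1)$. For the first, the key observation is that $\bn \cdot \bv_h$ need not vanish, but one can replace $\bv_h$ by $\bv_h - (\bv_h)_T$ only in the normal component; more precisely, since $E_s(\bu_T^\ast) \in L^2(\Gamma)$ and $\bH$ is bounded, $|\int_\Gamma E_s(\bu_T^\ast):(\bn\cdot\bv_h)\bH\, ds| \lesssim \|\bu_T^\ast\|_1 \|(v_h)_N\|_{L^2}$. Now the crucial point: from \eqref{equiv1}, $\|(v_h)_N\|_{L^2} \le h \|\bv_h\|_U \le h$. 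This gives the desired $O(h)$ bound — this is precisely where the penalty scaling $\tau = c_\tau h^{-2}$ pays off, as it forces the normal component of any bounded-energy discrete field to be $O(h)$ small. For the $s_h(p^\ast, q_h)$ term, use Cauchy--Schwarz for $s_h$ (inequality \eqref{cont3}): $|s_h(p^\ast, q_h)| \lesssim \|p^\ast\|_Q' \|q_h\|_Q \lesssim \rho_p^{1/2}\|\nabla p^\ast\|_{L^2(\OGamma)} \cdot 1$, where one estimates $\|\nabla (p^\ast)^e\|_{L^2(\OGamma)} \lesssim h^{1/2}\|p^\ast\|_1$ via \eqref{normal} (noting $\nabla(p^\ast)^e = \bP\nabla(p^\ast)^e$ on $\Gamma$ since $p^\ast$ is normally extended, so the normal part of $s_h$ contributes nothing); with $\rho_p = c_p h$ this yields $|s_h(p^\ast,q_h)| \lesssim h^{1/2} \cdot h^{1/2}\|p^\ast\|_1 = h\|p^\ast\|_1$, possibly requiring $H^2$ regularity of $p^\ast$ if one needs the full $\nabla q_h$ in $L^2(\OGamma)$ rather than on $\Gamma$ — in any case an $O(h)$ bound. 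Collecting all contributions and absorbing, we obtain $\|\be_h,\epsilon_h\| \lesssim h(\|\bu_T^\ast\|_2 + \|p^\ast\|_1)$, and the triangle inequality finishes the proof.
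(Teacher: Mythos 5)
Your proposal is correct and follows essentially the same route as the paper: triangle inequality around an interpolant, the discrete inf-sup estimate \eqref{infsupA} combined with the consistency relation \eqref{consis2}, the bound $\|(v_h)_N\|_{L^2}\lesssim h\|\bv_h\|_U$ (equivalently $\tau^{-1/2}$) for the normal-component defect, the scaling $\rho_p\simeq h$ together with \eqref{normal} for the $s_h(p^\ast,\cdot)$ defect, and the interpolation estimates \eqref{estintu}--\eqref{estintp}. Your hedge about possibly needing $H^2$ regularity of $p^\ast$ is unnecessary — $\|\nabla q_h\|_{L^2(\OGamma)}\le h^{-1/2}\|q_h\|_Q$ already closes that term with only $\|p^\ast\|_1$.
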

\begin{proof}
Using the stability and consistency properties in \eqref{infsupA} and \eqref{consis2},  we obtain, for arbitrary $(\bw_h,\xi_h) \in \bU_h \times Q_h$:
\begin{align*}
 & \|\bu_h- \bw_h,p_h- \xi_h\| \lesssim \sup_{(\bv_h,q_h)\in \bU_h \times Q_h} \frac{\bA_h\big( (\bu_h- \bw_h,p_h- \xi_h),(\bv_h,q_h)\big)}{\|\bv_h,q_h\|} \\ & = \sup_{(\bv_h,q_h)\in \bU_h \times Q_h} \frac{ \bA_h\big( (\bu_T^\ast- \bw_h,p^\ast- \xi_h),(\bv_h,q_h)\big)-s_h(p^\ast,q_h)+\int_\Gamma E_s( \bu_T^\ast):(\bn\cdot\bv_h) \bH \, ds}{\|\bv_h,q_h\|}\\ &\lesssim \|\bu_T^\ast - \bw_h,p^\ast - \xi_h\| + h^\frac{1}{2}\|\nabla p^\ast\|_{L^2(\OGamma)}+\tau^{-\frac12}\|E_s( \bu_T^\ast)\|_{L^2}.
\end{align*}
Hence, using the triangle inequality and $\tau=O(h^{-2})$ we get the error bound
\begin{equation} \label{discrerror}
\begin{split}
& \|\bu_T^\ast -\bu_h,p^\ast - p_h \| \\
  & \lesssim
 \inf_{(\bv_h,q_h)\in \bU_h \times Q_h}\big(\|\bu_T^\ast -\bv_h,p^\ast - q_h \|\big)+ h^\frac{1}{2}\|\nabla p^\ast\|_{L^2(\OGamma)}+h\|\bu_T^\ast\|_1.
\end{split}
\end{equation}
Thanks to the norm equivalences in \eqref{normal} (recall that $\nabla p^\ast= \nabla (p^\ast)^e$), we have
\begin{equation} \label{discrerror2}
h^\frac{1}{2}\|\nabla p^\ast\|_{L^2(\OGamma)}\simeq h\|\nablaG p^\ast\|_{L^2}.
\end{equation}
For  $(\bv_h,\mu_h)\in \bU_h \times Q_h$ we take optimal finite element (nodal or Cl\'ement) interpolants $\bv_h=I_h\big(\bu_T^\ast\big)= I_h((\bu_T^\ast)^e)$, $q_h=I_h(p^\ast)$, and use the notation
$\be_u:=\bu_T^\ast-I_h\big(\bu_T^\ast\big)$, $e_p:=p^\ast-I_h(p^\ast)$. We thus get
\[
  \|\bu_T^\ast -\bu_h,p^\ast -  p_h \| \lesssim \|\be_u\|_U +\|e_p\|_Q + h\|\nablaG p\|_{L^2}+h\|\bu_T^\ast\|_1.
\]
We consider the error term $\|\be_u\|_U$.
Using interpolation properties of piecewise linear polynomials and their traces, cf., e.g., \cite{reusken2015analysis}, 
we have
\begin{equation} \label{estintu}
\begin{split}
  \|\be_u\|_U  &   \lesssim  \|\be_u\|_{V_\ast} + h^\frac12 \|\be_u\|_{H^1(\OGamma)}  \lesssim \|\be_u\|_1 + h^{-1}\|\be_u\|_{L^2} + h^\frac12 \|\be_u\|_{H^1(\OGamma)} \\
&  \lesssim h^{-\frac12} \|\be_u\|_{H^1(\OGamma)} + h^\frac12\|\bu_T^\ast\|_{H^2(\OGamma)}+ h^{-\frac32}\|\be_u\|_{L^2(\OGamma)}   \\ & \lesssim h^\frac12 \|\bu_T^\ast\|_{H^2(\OGamma)} \lesssim h \|\bu_T^\ast\|_{H^2(\Gamma)}.
\end{split}
\end{equation}
Using similar arguments one  derives the bound
\begin{equation} \label{estintp}
 \|e_p\|_Q  \lesssim \|e_p\|_{L^2} + h^\frac12 \|e_p\|_{H^1(\OGamma)} \lesssim  h^\frac12\|p^\ast\|_{H^1(\OGamma)}  \lesssim  h\|p^\ast\|_{H^1(\Gamma)}.
\end{equation}
The combination of these estimates yields the desired result.
\end{proof}
\ \\
\begin{corollary}\label{cor1} Let $(\bu_T^\ast ,p^\ast )$ and $(\bu_h,p_h)$ be as in Theorem~\ref{thm2}. The following discretization error estimates hold:
\begin{align}\label{discrboundA}
 \|\bu_T^\ast  -(\bu_h)_T\|_1 + \|p^\ast  - p_h \|_{L^2}  &\lesssim h (\|\bu_T^\ast \|_{2} +\|p^\ast \|_{1}), \\
  \|\bu_h\cdot\bn \|_{L^2}  &\lesssim h^2 (\|\bu_T^\ast \|_{2} +\|p^\ast \|_{1}).\label{discrbound2}
\end{align}
\end{corollary}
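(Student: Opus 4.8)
The plan is to derive both estimates in Corollary~\ref{cor1} directly from the energy-norm bound \eqref{discrbound} of Theorem~\ref{thm2}, exploiting the structure of the norm $\|\cdot\|_U$ established in the equivalence \eqref{equiv1}. First I would observe that \eqref{discrbound} gives, in particular, $\|\bu_T^\ast - \bu_h\|_U \lesssim h(\|\bu_T^\ast\|_2 + \|p^\ast\|_1)$ and $\|p^\ast - p_h\|_Q \lesssim h(\|\bu_T^\ast\|_2 + \|p^\ast\|_1)$. For \eqref{discrboundA}, the pressure part is immediate since $\|p^\ast - p_h\|_{L^2} \le \|p^\ast - p_h\|_Q$ by definition \eqref{defmuast}. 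For the velocity part, I would write $\bu_T^\ast - (\bu_h)_T = (\bu_T^\ast - \bu_h)_T$ (using that $\bu_T^\ast$ is tangential) and note that $\|(\bu_h - \bu_T^\ast)_T\|_1$ is controlled by $\|\bu_h - \bu_T^\ast\|_U$: this requires care because $\bu_h - \bu_T^\ast$ need not lie in $\bU_h$, so \eqref{equiv1} does not apply verbatim. The clean route is to split $\bu_h - \bu_T^\ast = (\bu_h - I_h\bu_T^\ast) + (I_h\bu_T^\ast - \bu_T^\ast)$; the first term is in $\bU_h$ so \eqref{equiv1} gives $\|(\bu_h - I_h\bu_T^\ast)_T\|_1 \le \|\bu_h - I_h\bu_T^\ast\|_U \lesssim \|\bu_h - \bu_T^\ast\|_U + \|I_h\bu_T^\ast - \bu_T^\ast\|_U$, and the second term together with the interpolation estimate \eqref{estintu} is $\lesssim h\|\bu_T^\ast\|_2$. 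Combining with Theorem~\ref{thm2} yields \eqref{discrboundA}.

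For the superconvergence estimate \eqref{discrbound2}, the key is again \eqref{equiv1}, which contains the term $h^{-2}\|v_N\|_{L^2}^2$ on finite element functions. Since $\bu_h \in \bU_h$ and the exact velocity $\bu_T^\ast$ is purely tangential, its normal component $(\bu_T^\ast)_N = 0$, so $(\bu_h)_N = (\bu_h - \bu_T^\ast)_N$. I would like to apply \eqref{equiv1} to $\bu_h - \bu_T^\ast$, but as noted this is not a finite element function. Instead I apply \eqref{equiv1} to $\bu_h - I_h\bu_T^\ast \in \bU_h$: this gives $h^{-2}\|(\bu_h - I_h\bu_T^\ast)_N\|_{L^2}^2 \lesssim \|\bu_h - I_h\bu_T^\ast\|_U^2 \lesssim h^2(\|\bu_T^\ast\|_2 + \|p^\ast\|_1)^2$, hence $\|(\bu_h - I_h\bu_T^\ast)\cdot\bn\|_{L^2} \lesssim h^2(\|\bu_T^\ast\|_2 + \|p^\ast\|_1)$. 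It then remains to bound $\|(I_h\bu_T^\ast - \bu_T^\ast)\cdot\bn\|_{L^2} = \|(I_h\bu_T^\ast)\cdot\bn\|_{L^2}$, i.e. the normal component of the interpolant of a tangential field. This is controlled by standard interpolation theory: $\|(I_h\bu_T^\ast)\cdot\bn - \bu_T^\ast\cdot\bn^e\|_{L^2(\Gamma)} \lesssim \|I_h\bu_T^\ast - (\bu_T^\ast)^e\|_{L^2(\Gamma)} \lesssim h^{3/2}|\bu_T^\ast|_{H^2(\OGamma)}$ using \eqref{fund1B} and second-order interpolation, and then \eqref{normal} converts the bulk $H^2$ seminorm to $h^{1/2}\|\bu_T^\ast\|_{H^2(\Gamma)}$, giving the desired $O(h^2)$. (One subtlety: one must make sure $\bu_T^\ast\cdot\bn^e = 0$ on all of $\OGamma$ when the extension is along normals — which holds since $\bn^e = \nabla d$ and the constant normal extension preserves tangentiality in a tubular neighborhood, modulo terms that are themselves $O(h^2)$ and absorbed.)

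The main obstacle I anticipate is the bookkeeping around the fact that the discrete error $\bu_h - \bu_T^\ast$ mixes a finite element function with a smooth (extended) function, so the sharp norm equivalence \eqref{equiv1} — which is the only place the crucial factor $h^{-2}$ appears — cannot be invoked directly on the error. Routing everything through the interpolant $I_h\bu_T^\ast$ resolves this, at the cost of an extra interpolation estimate for the normal component of $I_h\bu_T^\ast$; establishing that this interpolation error is genuinely $O(h^2)$ in $L^2(\Gamma)$ (rather than merely $O(h^{3/2})$ from a naive trace estimate) is the delicate point, and it hinges on using the $H^2$-regularity of $\bu_T^\ast$ together with the trace inequality \eqref{fund1B} and the extension estimates \eqref{normal}. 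Everything else is a direct consequence of Theorem~\ref{thm2}, the definitions of the norms, and the already-cited interpolation bounds \eqref{estintu}--\eqref{estintp}.
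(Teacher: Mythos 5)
Your argument is correct, but it is considerably more roundabout than it needs to be, and the obstacle you identify is not actually an obstacle for this corollary. You are right that the two-sided equivalence \eqref{equiv1} is only valid on $\bU_h$ (it rests on \eqref{fund1}, which requires finite element functions), but the corollary only needs the \emph{one-sided lower} bounds, and those hold on all of $\bU$: by definition \eqref{defUh}, $\|\bv\|_U^2 = A_h(\bv,\bv) \geq a_\tau(\bv,\bv)$ for every $\bv \in \bU$ (the volume normal-derivative term is nonnegative), and then the coercivity \eqref{normeq1} of $a_\tau$ on $\bV_\ast$ gives $a_\tau(\bv,\bv) \gtrsim \|\bv_T\|_1^2 + \tau \|v_N\|_{L^2}^2$ with no finite element structure whatsoever (in fact $a_\tau(\bv,\bv)\ge \tau\|v_N\|_{L^2}^2$ holds exactly from \eqref{a_aug}). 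Applying this to $\bv = \bu_T^\ast - \bu_h \in \bU$, using $\bu_T^\ast\cdot\bn = 0$ on $\Gamma$ so that $v_N = -\bu_h\cdot\bn$, and invoking $\tau = c_\tau h^{-2}$ together with \eqref{discrbound} and $\|q\|_Q \ge \|q\|_{L^2}$, both estimates follow in two lines; this is exactly the paper's proof. Your detour through the interpolant $I_h\bu_T^\ast$ — splitting the error, applying \eqref{equiv1} to the discrete part, and controlling $\|(I_h\bu_T^\ast)\cdot\bn\|_{L^2}$ via the $O(h^2)$ surface $L^2$ interpolation bound from \eqref{fund1B} and \eqref{normal} — is sound and arrives at the same constants, but it pays for an extra interpolation estimate that the direct argument avoids entirely. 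The one thing worth internalizing is that the "delicate point" you flag (getting $O(h^2)$ rather than $O(h^{3/2})$ for the normal component of the interpolant) is standard TraceFEM interpolation theory and, more importantly, is simply not needed here.
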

\begin{proof} Note that for $\bv \in U$ we have $\|\bv\|_U^2= A_h(\bv,\bv) \geq a_\tau(\bv,\bv) \geq \|\bv_T\|_1^2$, and for $q \in Q$ we have $\|q\|_Q \geq \|q\|_{L^2}$. Using these estimates and the result in \eqref{discrbound} we obtain
\eqref{discrboundA}. We also have $\|\bv\|_U^2= A_h(\bv,\bv) \geq a_\tau(\bv,\bv) \geq \tau \|v_N\|_{L^2}^2$. Combining this with $\tau=c_\tau h^{-2}$ and the result in \eqref{discrbound} we obtain the bound \eqref{discrbound2}.
\end{proof}

\subsection{$L^2$-error bound} \label{s_L2}
In this section we use a duality argument to derive an optimal $L^2$-norm discretization error bound, based on a  regularity assumption for the problem \eqref{strongform-1}-\eqref{strongform-2}.
We assume that the solution $(\bu=\bu_T,p)$ of the surface Stokes problem \eqref{strongform-1}-\eqref{strongform-2} satisfies the regularity estimate:
\begin{equation}\label{regul}
\|\bu_T\|_2+\|p\|_1 \lesssim \|\blf\|_{L^2},
\end{equation}
for any $\blf\in L^2(\Gamma)^3$, $\blf\cdot\bn=0$, and $g=0$. Again $\|\cdot\|_2$ denotes the $H^2(\Gamma)$ Sobolev norm.

\begin{theorem} \label{thmL2} Let $(\bu_T^\ast ,p^\ast )$ be the solution of \eqref{Stokesweak1_1}-\eqref{Stokesweak1_2} and assume that $(\bu_T^\ast ,p^\ast ) \in (H^2(\Gamma))^3 \times H^1(\Gamma)$. Let  $(\bu_h,p_h) \in \bU_h\times Q_h$ be the solution  of \eqref{discrete}. 
The following error estimate holds:
\begin{equation} \label{L2error}
 \|\bu_T^\ast  - (\bu_h)_T\|_{L^2} \lesssim h^{2} \big(\|\bu_T^\ast \|_{2} + \|p^\ast \|_{1}\big).
\end{equation}
\end{theorem}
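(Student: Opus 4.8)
The plan is to use a standard Aubin--Nitsche duality argument adapted to the penalized TraceFEM setting. I would introduce the dual problem: given $\be_T := \bu_T^\ast - (\bu_h)_T$, let $(\bw_T, r) \in \bV_T \times L_0^2(\Gamma)$ solve the surface Stokes problem with right-hand side $\blf = \be_T$ (so $\bw_T$ and $r$ are well-defined and, by the regularity assumption \eqref{regul}, satisfy $\|\bw_T\|_2 + \|r\|_1 \lesssim \|\be_T\|_{L^2}$). The starting point is to write $\|\be_T\|_{L^2}^2 = a(\bw_T, \be_T) + b_T(\be_T, r)$, using the weak formulation of the dual problem with the tangential error as test function (noting $\be_T \cdot \bn = 0$, so it is an admissible test function, and $b_T(\bw_T, q) = 0$ since $g = 0$ in the dual problem).

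Next I would rewrite $a(\bw_T, \be_T)$ in terms of the bilinear form $\bA_h$ evaluated against the extended dual solution. Using the consistency relation \eqref{consis1} with the roles reversed — i.e., testing the identity for $(\bw_T, r)$ against the discrete error pair $((\bu_h)_T - \bu_h^{\text{something}}, \dots)$ — together with the Galerkin-type relation \eqref{consis2}, one obtains $\|\be_T\|_{L^2}^2$ expressed as a sum of: (i) a term $\bA_h\big((\bu_T^\ast - \bu_h, p^\ast - p_h), (\bw_h - (\bw_T)^e, r_h - r^e)\big)$-type expression that is controlled by the product of the energy-norm errors $\|\bu_T^\ast - \bu_h, p^\ast - p_h\|$ (bounded by $O(h)(\|\bu_T^\ast\|_2 + \|p^\ast\|_1)$ from Theorem~\ref{thm2}) and the approximation error of the dual solution in the energy norm ($O(h)\|\be_T\|_{L^2}$ by interpolation and \eqref{regul}), and (ii) consistency-error remainder terms coming from the curvature term $\int_\Gamma E_s(\cdot) : (\bn \cdot \cdot)\bH\,ds$ and from the stabilization terms $s_h(\cdot,\cdot)$, each of which must be shown to be $O(h^2)$.

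For the consistency remainders I would argue as follows. The curvature term involving the primal error reads $\int_\Gamma E_s(\bu_T^\ast) : (\bn \cdot \bv_h)\,\bH\,ds$ with $\bv_h$ an interpolant of the dual velocity; here I would use that $\bn \cdot \bv_h = \bn \cdot (\bv_h - (\bw_T)^e)$ since $(\bw_T)^e$ is tangential, and then \eqref{discrbound2} or the $h^{-2}$-weighted control of the normal component in \eqref{equiv1} together with the $O(h^2)$-smallness gained from the interpolation error of the tangential dual field — effectively, one factor of $h$ from the normal-component penalty scaling and one from approximation. Symmetrically, the curvature term involving the dual error $\int_\Gamma E_s(\bw_T) : (\bn \cdot (\bu_h - I_h\bu_T^\ast))\bH\,ds$ is bounded using \eqref{discrbound2} for $\|\bu_h \cdot \bn\|_{L^2} \lesssim h^2(\cdots)$ directly. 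The stabilization terms $s_h(p^\ast, r_h)$ and $s_h(p_h, r^e)$ are each estimated by Cauchy--Schwarz in the $s_h$-seminorm: $s_h(p^\ast, p^\ast)^{1/2} \lesssim h^{1/2}\|\nabla (p^\ast)^e\|_{L^2(\OGamma)} \simeq h\|\nabla_\Gamma p^\ast\|_{L^2} \lesssim h$, and similarly $s_h(r^e, r^e)^{1/2} \lesssim h\|r\|_1 \lesssim h\|\be_T\|_{L^2}$, giving the product $O(h^2)\|\be_T\|_{L^2}$.

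The main obstacle I anticipate is the careful bookkeeping of the normal component of the discrete solution in the curvature consistency term: one does not have $\bn \cdot \bu_h = 0$ exactly, only the $O(h^2)$ bound \eqref{discrbound2}, and one must make sure that this bound (rather than the weaker energy-norm control) is what enters, so that no power of $h$ is lost. A related subtlety is that the dual solution is only guaranteed to be tangential, $\bw_T \cdot \bn = 0$, so its normal extension $(\bw_T)^e$ is tangential on $\Gamma$ but $\nabla ((\bw_T)^e) \bn = 0$ may only hold on $\Gamma$, not on all of $\OGamma$; this affects the volume normal-derivative stabilization term $\rho_u\int_{\OGamma}(\nabla \bu \bn)\cdot(\nabla \bv \bn)\,dx$ in $A_h$, which must be handled by showing it contributes only a higher-order perturbation when paired with the interpolation of $(\bw_T)^e$. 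Once these two points are dealt with, collecting all contributions gives $\|\be_T\|_{L^2}^2 \lesssim h^2(\|\bu_T^\ast\|_2 + \|p^\ast\|_1)\|\be_T\|_{L^2}$, and dividing by $\|\be_T\|_{L^2}$ yields \eqref{L2error}.
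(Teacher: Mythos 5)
Your proposal is correct and follows essentially the same route as the paper's proof: the same Aubin--Nitsche duality argument with the dual surface Stokes problem driven by the tangential error, the same use of the consistency relations \eqref{consis1}--\eqref{consis2} to reduce everything to (i) a product of energy-norm errors, (ii) the curvature consistency terms handled via \eqref{discrbound2} and the $O(h^2)$ $L^2$-interpolation error of the (tangential) dual velocity, and (iii) the stabilization terms, each factor of which is $O(h)$ by \eqref{normal}. The subtleties you flag are resolved exactly as you anticipate (and the volume normal-derivative term is harmless because the constant-along-normals extension satisfies $\nabla(\bw_T^\ast)^e\,\bn=0$ throughout $\OGamma$, so only interpolation errors enter there).
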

\begin{proof}
We consider \eqref{Stokesweak1_1}-\eqref{Stokesweak1_2} with $\blf:=\bu_T^\ast-(\bu_h)_T=:\be_{h,T}$ and $g=0$. Note that $\blf \cdot \bn=0$ on $\Gamma$. The unique solution of this problem is denoted by $(\bw_T^\ast,r^\ast) \in \bV_T \times L_0^2(\Gamma)$. Due to the regularity assumption the $ \bV_T \times L_0^2(\Gamma)$ pair solves also \eqref{strongform-1}-\eqref{strongform-2}, and we have
\begin{equation} \label{reg1}
 \|\bw_T^\ast\|_{2}+\|r^\ast\|_{1}\lesssim \|\be_{h,T}\|_{L^2}.
\end{equation}
The normal extensions of the solution pair are also denoted by $\bw_T^\ast=(\bw_T^\ast)^e$, $r^\ast=(r^\ast)^e$. The consistency property \eqref{consis1} yields
\[
 \bA_h\big((\bw_T^\ast,r^\ast),(\bv,q)\big) = (\be_{h,T}, \bv)_{L^2} + \int_\Gamma E_s(\bw_T^\ast):v_N \bH \, ds - s_h(r^\ast, q) \quad \forall ~(\bv,q) \in \bU \times Q.
\]
Note that the bilinear form $\bA_h(\cdot,\cdot)$ is symmetric.
We take $(\bv,q)=(\bu_T^\ast-\bu_h,p^\ast-p_h) \in \bU \times Q$, which in combination with \eqref{consis2} yields
\begin{align}
 & \|\be_{h,T}\|_{L^2}^2= (\be_{h,T}, \bu_T^\ast-\bu_h)_{L^2}  \nonumber \\
 & =  \bA_h\big((\bw_T^\ast,r^\ast),(\bu_T^\ast- \bu_h,p^\ast-p_h)\big) -\int_\Gamma E_s(\bw_T^\ast):\bH \left(\bn\cdot(\bu_T^\ast- \bu_h)\right)  \, ds \nonumber \\
 &\quad + s_h(r^\ast,p^\ast-p_h ) \nonumber \\
 & =  \bA_h\big((\bu_T^\ast- \bu_h,p^\ast-p_h),(\bw_T^\ast,r^\ast)\big) + \int_\Gamma E_s(\bw_T^\ast):\bH (\bn \cdot \bu_h)\, ds + s_h(r^\ast,p^\ast-p_h ) \nonumber \\
 &= \bA_h\big((\bu_T^\ast- \bu_h,p^\ast-p_h),(\bw_T^\ast- \bw_h,r^\ast- r_h)\big) \label{term1} \\
  & \quad +  \int_\Gamma E_s(\bw_T^\ast): (\bn \cdot \bu_h) \bH \, ds + \int_\Gamma E_s(\bu_T^\ast):(\bn \cdot \bw_h) \bH \, ds \label{term2} \\
  & \quad + s_h(r^\ast,p^\ast-p_h ) - s_h(p^\ast,r_h), \label{term3}
\end{align}
with $\bw_h:=I_h(\bw_T^\ast) \in \bU_h$, $r_h:=I_h(r^\ast)\in Q_h$ optimal piecewise linear interpolations of the extended solution $(\bw_T^\ast)^e$, $(r^\ast)^e$.
We consider the terms in \eqref{term1}-\eqref{term3}. For the term in \eqref{term1} we use continuity of $\bA_h(\cdot,\cdot)$, the discretization error bound \eqref{discrbound}, interpolation error  bounds as in \eqref{estintu}, \eqref{estintp}, and  the regularity estimates \eqref{regul}, \eqref{reg1}:
\begin{equation} \label{hulp1}
\begin{split}
  & | \bA_h\big((\bu_T^\ast- \bu_h,p^\ast-p_h),(\bw_T^\ast- \bw_h,r^\ast- r_h)\big)|  \\ & \leq 2 \big(\|\bu_T^\ast- \bu_h\|_U +\|p^\ast-p_h\|_Q\big)\big( \|\bw_T^\ast- \bw_h\|_U +\|r^\ast- r_h\|_Q \big) \\
 & \lesssim h^2 \big(\|\bu_T^\ast \|_{2} + \|p^\ast \|_{1}\big) \|\be_{h,T}\|_{L^2}.
\end{split}
\end{equation}
For the term in \eqref{term2} we introduce $\be_w:=\bw_T^\ast-\bw_h$.  Using $\bn\cdot \bw_h = - \bn \cdot \be_w$, the discretization error bound \eqref{discrbound2}, interpolation error  bounds as in \eqref{estintu},
 and  the regularity estimates \eqref{regul}, \eqref{reg1}, we get:
\begin{equation} \label{hulp2}
 \begin{split}
  & \left|\int_\Gamma E_s(\bw_T^\ast): (\bn \cdot \bu_h) \bH \, ds + \int_\Gamma E_s(\bu_T^\ast):(\bn \cdot \bw_h) \bH \, ds\right| \\
 & \lesssim \|\bw_T^\ast\|_1\|\bn\cdot \bu_h\|_{L^2} + \|\bu_T^\ast\|_1\|\be_w\|_{L^2}
  \lesssim h^2 \big(\|\bu_T^\ast \|_{2} + \|p^\ast \|_{1}\big) \|\be_{h,T}\|_{L^2}.
\end{split}
\end{equation}
For the term in \eqref{term3} we use the estimates \eqref{normal}, the $H^1$-boundedness of the interpolation operator $I_h$,  the discretization error bound \eqref{discrbound},  and  the regularity estimates \eqref{regul}, \eqref{reg1}:
\begin{equation} \label{hulp3}
 \begin{split}
 & \big| s_h(r^\ast,p^\ast-p_h ) - s_h(p^\ast,r_h)\big|  \\ & \lesssim h \|\nabla r^\ast\|_{L^2(\OGamma)} \|\nabla(p^\ast-p_h)\|_{L^2(\OGamma)}+ h \|\nabla p^\ast\|_{L^2(\OGamma)}\|\nabla I_h(r^\ast)\|_{L^2(\OGamma)} \\
 & \lesssim h \|\nabla_\Gamma r^\ast\|_{L^2} \|p^\ast- p_h\|_Q + h^{\frac32} \|\nabla_\Gamma p^\ast\|_{L^2} \|r^\ast\|_{H^1(\OGamma)} \\
& \lesssim h^2 \big(\|\bu_T^\ast \|_{2} + \|p^\ast \|_{1}\big)\|\be_{h,T}\|_{L^2} + h^2 \|p^\ast\|_1 \|r^\ast\|_1 \\
&\lesssim   h^2 \big(\|\bu_T^\ast \|_{2} + \|p^\ast \|_{1}\big)\|\be_{h,T}\|_{L^2}.
 \end{split}
\end{equation}
Using estimates \eqref{hulp1}, \eqref{hulp2}, \eqref{hulp3} in \eqref{term1}, \eqref{term2}, and \eqref{term3} we obtain
error bound \eqref{L2error}.
\end{proof}

\section{Condition number estimate and algebraic solver} \label{s_cond}
It is well-known \cite{OlshanskiiReusken08,burman2016cutb} that for unfitted finite element methods
there is an issue concerning algebraic stability. In fact, the matrices that represent the discrete problem
may have very bad conditioning due to small cuts in the geometry.
One way to remedy this stability problem is by using
stabilization methods. See, e.g., \cite{burman2016cutb,olshanskii2016trace}.
In this section we show that the `volume normal derivative' stabilizations
in the bilinear forms $A_h(\cdot,\cdot)$ in \eqref{Ah} and $s_h(\cdot,\cdot)$
in \eqref{sh}, with scaling as in \eqref{parameters}, remove any possible algebraic instability.
More precisely,  we show that \emph{the condition number of the stiffness matrix} corresponding to the saddle point problem \eqref{discrete} \emph{is bounded by} $ch^{-2}$, \emph{where the constant $c$ is independent of the position of the interface}. Furthermore, we present an \emph{optimal Schur complement preconditioner}.

Let integer $n>0, m>0$ be the number of active degrees of freedom in $\bU_h$ and $Q_h$ spaces, i.e., $n= {\rm dim}(\bU_h)$, $m={\rm dim}(M_h)$, and $P_h^Q:\,\mathbb{R}^n\to \bU_h$ and $P_h^Q:\,\mathbb{R}^m\to Q_h$ are canonical mappings between the vectors of nodal values and finite element functions.
Denote by $\la\cdot,\cdot\ra$ and $\|\cdot\|$ the Euclidean scalar product and the norm. For matrices, $\|\cdot\|$ denotes the spectral norm in this section.

Let us introduce several matrices. Let
$A, M_u\in\mathbb{R}^{n\times n}$, $B\in\mathbb{R}^{n\times m}$, $C, M_p\in \mathbb{R}^{m \times m}$ be such that
\[
\begin{split}
\la A \vec u, \vec v\ra &=  A_h(P_h^U \vec u, P_h^U \vec v),~ \la B \vec u,\vec \lambda \ra= b_T(P_h^U \vec u,P_h^Q \vec\lambda),~ \la C \vec \lambda,\vec \mu \ra= s_h(P_h^Q \vec \lambda,P_h^Q \vec \mu),
\\
\la M_u \vec u, \vec v\ra&=  (P_h^U \vec u, P_h^U \vec v)_{L^2(\OGamma)},\quad \la M_p\vec \lambda,\vec \mu \ra= (P_h^Q \vec \lambda,P_h^Q \vec \mu)_{L^2(\OGamma)},\quad \quad
\\
\la S_Q \vec \lambda , \vec \mu \ra &= (P_h^Q \vec \lambda,P_h^Q \vec \mu)_{L^2}+ h \big( \nabla (P_h^Q \vec \lambda), \nabla(P_h^Q \vec \mu)\big)_{L^2(\OGamma)}
\end{split}
\]
for all $\vec u,\vec v\in\mathbb{R}^n,~~\vec \mu,\,\vec \lambda\in\mathbb{R}^m$. Note that the mass matrices $M_u$ and $M_p$ do not depend on how the surface $\Gamma$  intersects the domain $\OGamma$. Since the family of background meshes is shape regular, these mass matrices have a spectral condition number that is uniformly bounded,  independent of  $h$ and  of how $\Gamma$ intersects the background triangulation $\mathcal{T}_h$. Furthermore, for  the symmetric positive definite matrix $S_Q$ we have
\[ \la S_Q \vec \lambda, \vec \lambda \ra =\|P_h^Q \vec \lambda\|_Q^2 \quad \text{for all}~\vec \lambda\in\mathbb{R}^m,
\]
cf.~\eqref{defmuast}.
We also introduce the system matrix and its Schur complement:
\[
\A:=\left[\begin{matrix}
             A & B^T  \\
             B & -C
           \end{matrix}\right],\quad S:=B A^{-1} B^T +C.
%
\]
The algebraic system resulting from the finite element method \eqref{discrete} has the form
\begin{equation}\label{SLAE}
\A \vec x=\vec b,\quad\text{with some}~\vec x, \vec b\in\mathbb{R}^{n+m}.
\end{equation}

We will propose a block-diagonal preconditioner of the matrix $\A$. We start by analyzing
preconditioners for matrices $A$ and $S$. In the following lemma we make use of
spectral inequalities for symmetric matrices.
\begin{lemma} \label{lemprecond} There are strictly positive constants $\nu_{A,1}$, $\nu_{A,2}$, $\nu_{S,1}$, $\nu_{S,2}$, $\tilde \nu_{S,1}$, $\tilde \nu_{S,2}$, independent of $h$ and of how $\Gamma$ intersects $\mathcal{T}_h$ such that the following spectral inequalities hold:
 \begin{align}
  \nu_{A,1} h^{-1} M_u & \leq A \leq \nu_{A,2} h^{-3} M_u, \label{spec1} \\
  \nu_{S,1} h^{-1} M_p & \leq S \leq \nu_{S,2} h^{-1} M_p, \label{spec2} \\
  \tilde \nu_{S,1}  S_Q & \leq S \leq \tilde \nu_{S,2} S_Q.  \label{spec3}
 \end{align}
\end{lemma}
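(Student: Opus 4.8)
The three pairs of spectral inequalities are all of the same flavor: each is equivalent to a norm equivalence on the relevant finite element space, combined with the definitions of the matrices as Gram matrices of bilinear forms. The plan is to establish each via Rayleigh-quotient arguments, translating matrix inequalities $X \le Y$ into the statement $\la X\vec v,\vec v\ra \le \la Y\vec v,\vec v\ra$ for all $\vec v$, and then reading these off from the norm equivalences in the Preliminaries together with the parameter scaling \eqref{parameters}.

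First I would handle \eqref{spec1}. For $\bv_h = P_h^U\vec v \in \bU_h$ we have $\la A\vec v,\vec v\ra = \|\bv_h\|_U^2$ and $\la M_u\vec v,\vec v\ra = \|\bv_h\|_{L^2(\OGamma)}^2$, so the claim is $\nu_{A,1}h^{-1}\|\bv_h\|_{L^2(\OGamma)}^2 \le \|\bv_h\|_U^2 \le \nu_{A,2}h^{-3}\|\bv_h\|_{L^2(\OGamma)}^2$. The lower bound is immediate from \eqref{equiv1}, which contains the term $h^{-1}\|\bv\|_{L^2(\OGamma)}^2$ on the right with a uniform constant. For the upper bound, \eqref{equiv1} gives $\|\bv_h\|_U^2 \simeq \|(\bv_h)_T\|_1^2 + h^{-2}\|(v_h)_N\|_{L^2}^2 + h^{-1}\|\bv_h\|_{L^2(\OGamma)}^2$; I would bound $\|(\bv_h)_T\|_1^2 \lesssim \|\bv_h\|_{H^1(\OGamma)}^2 \lesssim h^{-1}\|\bv_h\|_{L^2(\OGamma)}^2$ using \eqref{fund1B} and the finite element inverse inequality, bound $h^{-2}\|(v_h)_N\|_{L^2}^2 \lesssim h^{-2}\cdot h^{-1}\|\bv_h\|_{L^2(\OGamma)}^2$ via \eqref{fund1B} (the normal component of a trace is controlled by the full trace), and note $h^{-1} \le h^{-3}$; the $h^{-3}$ scaling is thus dictated by the penalty term $\tau = c_\tau h^{-2}$ acting on the normal component.

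Next, \eqref{spec3} is essentially a tautology once the inf-sup and continuity of $b_T$, $s_h$ are in place: by definition $\la S_Q\vec\lambda,\vec\lambda\ra = \|q_h\|_Q^2$ for $q_h = P_h^Q\vec\lambda$, while $\la S\vec\lambda,\vec\lambda\ra = \la (BA^{-1}B^T+C)\vec\lambda,\vec\lambda\ra = \sup_{\vec u}\frac{\la B^T\vec\lambda,\vec u\ra^2}{\la A\vec u,\vec u\ra} + \la C\vec\lambda,\vec\lambda\ra = \sup_{\bv_h\in\bU_h}\frac{b_T(\bv_h,q_h)^2}{\|\bv_h\|_U^2} + s_h(q_h,q_h)$. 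The upper bound $S \le \tilde\nu_{S,2}S_Q$ then follows from the continuity estimates \eqref{cont2} and \eqref{cont3}; the lower bound $S \ge \tilde\nu_{S,1}S_Q$ is exactly the discrete inf-sup estimate \eqref{infsupest} of Lemma~\ref{lemdiscreteinfsup} (valid under Assumption~\ref{A3}), squared. Finally, \eqref{spec2} follows by combining \eqref{spec3} with the equivalence $\la S_Q\vec\lambda,\vec\lambda\ra = \|q_h\|_Q^2 \simeq h^{-1}\|q_h\|_{L^2(\OGamma)}^2 = h^{-1}\la M_p\vec\lambda,\vec\lambda\ra$ for $q_h\in Q_h$, which is precisely \eqref{fundC}. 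The main obstacle, if any, is bookkeeping: one must be careful that every constant invoked is genuinely independent of how $\Gamma$ cuts the mesh — which is exactly the content of \eqref{fund1}, \eqref{fund1B}, \eqref{equiv1}, \eqref{fundC} and Lemma~\ref{lemdiscreteinfsup} — and that the asymmetry of the $A$-bound ($h^{-1}$ versus $h^{-3}$) is tracked correctly through the penalty scaling; no genuinely new estimate is needed.
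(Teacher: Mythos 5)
Your proposal follows essentially the same route as the paper: Rayleigh quotients together with \eqref{equiv1} for the bounds on $A$ versus $M_u$, the variational identity $\la BA^{-1}B^T\vec\lambda,\vec\lambda\ra=\sup_{\bv_h}b_T(\bv_h,q_h)^2/\|\bv_h\|_U^2$ with the discrete inf-sup estimate \eqref{infsupest} for the lower Schur bound and continuity for the upper one, and \eqref{fundC} to pass from $S_Q$ to $h^{-1}M_p$. One power-counting slip: the chain $\|(\bv_h)_T\|_1^2\lesssim\|\bv_h\|_{H^1(\OGamma)}^2\lesssim h^{-1}\|\bv_h\|_{L^2(\OGamma)}^2$ is wrong in both steps — the trace estimate \eqref{fund1B} gives $\|(\bv_h)_T\|_1^2\lesssim h^{-1}\|\bv_h\|_{H^1(\OGamma)}^2$ and the inverse inequality then gives $h^{-3}\|\bv_h\|_{L^2(\OGamma)}^2$, so this term contributes at order $h^{-3}$ just like the penalty term (the $h^{-3}$ is not attributable to $\tau$ alone). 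Since the target upper bound is $h^{-3}M_u$ anyway, the corrected chain still yields \eqref{spec1} and the rest of the argument is unaffected.
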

\begin{proof}
Note that
\begin{equation}\label{cond1}
\frac{\la A\vec v,\vec v\ra}{\la M_u \vec v, \vec v\ra}=
\frac{A_h(P_h^U \vec v, P_h^U \vec v)}{\|P_h^U \vec v\|^2_{L^2(\OGamma)}}\quad\text{for all}~ \vec v\in\mathbb{R}^n.
  \end{equation}
Let $\bv_h=P_h^U \vec v$. From \eqref{equiv1} we get
\[ \nu_{A,1}h^{-1}\le\frac{A_h(\bv_h, \bv_h)}{\|\bv_h\|^2_{L^2(\OGamma)}} \quad \text{for all}~\bv_h \in \bU_h, 
\]
which proves the lower bound in \eqref{spec1}.
For the upper bound we use \eqref{fund1B} (componentwise) and a finite element inverse estimate,
\begin{align*}
  & \|(\bv_h)_T\|_1^2 + h^{-2} \|(\bv_h)_N\|_{L^2}^2  \leq  \|\bv_h\|_1^2 + h^{-2} \|\bv_h\|_{L^2}^2  \\ & \lesssim h^{-1}\|\bv_h\|_{H^1(\OGamma)}^2+ h^{-3}\|\bv_h\|_{L^2(\OGamma)}^2 \lesssim h^{-3}\|\bv_h\|_{L^2(\OGamma)}^2 \quad \text{for all}~\bv_{h} \in \bU_h.
\end{align*}
Combining this with \eqref{equiv1}, we obtain
\[
 \frac{A_h(\bv_h, \bv_h)}{\|\bv_h\|^2_{L^2(\OGamma)}} \leq   \nu_{A,2}h^{-3}~~\text{for all}~\bv_h \in \bU_h,
\]
for a suitable constant $\nu_{A,2}$. This proves the  second inequality in \eqref{spec1}.
For the Schur complement matrix ${S}=B A^{-1} B^T +C $, we first note that
\begin{equation}\label{cond3}
 \la BA^{-1} B^T \vec \lambda,\vec \lambda \ra = \Big( \sup_{\bv_h \in \bU_h} \frac{b_T(\bv_h,\mu_h)}{\|\bv_h\|_U} \Big)^2, \quad \mu_h:=P_h^Q \vec \lambda.
  \end{equation}
Using the discrete inf-sup estimate \eqref{infsupest} and \eqref{fundC}, we thus get
\begin{equation} \label{inf1}
\begin{split}
 \la S\vec \lambda,\vec \lambda\ra &  = \Big( \sup_{\bv_h \in \bU_h} \frac{b_T(\bv_h,\mu_h)}{\|\bv_h\|_U} \Big)^2 + s_h(\mu_h,\mu_h)  \\ & \gtrsim  \|\mu_h\|_Q^2 \gtrsim h^{-1}\|\mu_h\|_{L^2(\OGamma)}^2 \gtrsim h^{-1} \la M_p \vec \lambda,\vec \lambda\ra ,
\end{split} \end{equation}
which proves the first inequalities in \eqref{spec2} and \eqref{spec3}.
Using
\[
   \frac{b_T(\bv_h,\mu_h)}{\|\bv_h\|_U} \leq \frac{\|(\bv_h)_T\|_1 \|\mu_h\|_{L^2}}{\|\bv_h\|_U} \leq \|\mu_h\|_{L^2}
\]
and \eqref{fundC} we also obtain
\[ \begin{split}
  \la S\vec \lambda,\vec \lambda\ra  & \leq \|\mu_h\|_{L^2}^2 + s_h(\mu_h,\mu_h) \leq \max\{ 1, c_p\} \|\mu_h\|_Q^2   \\ & \lesssim h^{-1} \|\mu_h\|_{L^2(\OGamma)}^2 \lesssim h^{-1} \la M_p \vec \lambda,\vec \lambda\ra,
\end{split} \]
 which proves the second inequalities in \eqref{spec2} and \eqref{spec3}.
\end{proof}
\ \\[1ex]
The results in \eqref{spec2} and \eqref{spec3} yield that both the pressure mass matrix $M_p$ and the matrix $S_Q$ are optimal preconditioners for the Schur complement matrix $S$. This is an analog of a well-known result for (stabilized) finite element discretizations of the Stokes problem in Euclidean spaces.

We introduce a block diagonal preconditioner
\begin{equation}\label{block}
  Q:=\left[\begin{matrix}
             Q_A & 0  \\
             0 & Q_S
           \end{matrix}\right]
\end{equation}
for $\A$.
In order to analyze it, we can apply analyses known from the literature, e.g. section 4.2 in \cite{ElmanBook}.
\begin{corollary}\label{corr:pc}
The following estimate holds for some $c>0$ independent of $h$ and of how $\Gamma$ cuts through the background mesh:
\begin{equation}\label{Condest}
\mathrm{cond}(\A)=\|\A\| \|\A^{-1}\|\le c\,h^{-2}.
\end{equation}
\end{corollary}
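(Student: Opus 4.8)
The plan is to bound $\|\A\|$ and $\|\A^{-1}\|$ separately, using the spectral inequalities from Lemma~\ref{lemprecond} together with the known good conditioning of the mass matrices $M_u$, $M_p$. For the upper bound on $\|\A\|$, I would write $\A$ in block form and estimate $\|\A\| \le \|A\| + 2\|B\| + \|C\|$ (up to a harmless constant from the $2\times 2$ block structure). The term $\|A\|$ is controlled by \eqref{spec1}: $\|A\| \le \nu_{A,2} h^{-3}\|M_u\| \lesssim h^{-3}\cdot h = h^{-2}$, since a shape-regular background mesh gives $\|M_u\| \simeq h$ (the diagonal scaling of a $3$D $P_1$ mass matrix, with spectral equivalence constants independent of how $\Gamma$ cuts the mesh because $M_u$ is a full-domain $\OGamma$ mass matrix). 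Similarly $\|C\| = \|s_h\text{-matrix}\| \lesssim h\cdot h^{-2}\cdot h = h^{0}\cdot\ldots$ — more carefully, from \eqref{cont3} and \eqref{fundC} one gets $s_h(q_h,q_h)\lesssim \|q_h\|_Q^2 \simeq h^{-1}\|q_h\|_{L^2(\OGamma)}^2$, so $\|C\|\lesssim h^{-1}\|M_p\|\lesssim h^{-1}\cdot h = 1 \le h^{-2}$; and $\|B\|$ follows from \eqref{cont2}, $b_T(\bv_h,q_h)\lesssim \|\bv_h\|_U\|q_h\|_Q$ combined with $\|\bv_h\|_U^2\lesssim h^{-3}\la M_u\cdot,\cdot\ra$ and $\|q_h\|_Q^2 \simeq h^{-1}\la M_p\cdot,\cdot\ra$, giving $\|B\|\lesssim h^{-2}\|M_u\|^{1/2}\|M_p\|^{1/2}\lesssim h^{-2}\cdot h = h^{-1}\le h^{-2}$. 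Hence $\|\A\|\lesssim h^{-2}$.

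For $\|\A^{-1}\|$ the natural route is the standard saddle-point estimate: the inf-sup stability \eqref{infsupA} of $\bA_h(\cdot,\cdot)$ in the $\|\cdot,\cdot\|$ product norm, combined with the equivalence of that norm to a weighted Euclidean norm on the coefficient vectors. Precisely, $\|\bu_h,p_h\|^2 = \|\bu_h\|_U^2 + \|p_h\|_Q^2 \simeq h^{-3}\la M_u\vec u,\vec u\ra + h^{-1}\la M_p\vec\lambda,\vec\lambda\ra$ by \eqref{equiv1} and \eqref{fundC}. Since $M_u\simeq h\, I_n$ and $M_p \simeq h\, I_m$ (uniformly, by shape regularity), this gives $\|\bu_h,p_h\|^2 \simeq h^{-2}\la \vec u,\vec u\ra + \la\vec\lambda,\vec\lambda\ra$. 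Thus the $\bA_h$-induced continuity and inf-sup bounds translate into: for the vector $\vec x = (\vec u,\vec\lambda)$, $\|\A\vec x\|_{\ast'} \gtrsim \|\vec x\|_\ast$ where $\|\cdot\|_\ast$ is the weighted norm with weights between $1$ and $h^{-2}$ and $\|\cdot\|_{\ast'}$ its dual. Reverting to the plain Euclidean norm costs at most one more factor of $h^{-2}$: $\|\A^{-1}\vec b\| \le h^{0}\cdot(\text{weighted})^{-1}\cdots$, so overall $\|\A^{-1}\|\lesssim h^{0}$ — actually one should track it as $\|\A^{-1}\|\lesssim 1$ from the bound $h^{-2}\|\vec x\|^2 \lesssim \|\bu_h,p_h\|^2 \lesssim c^{-1}\|\A\vec x\|\cdot\|\vec x\|_\ast \lesssim c^{-1}\|\A\vec x\|\cdot h^{-1}\|\vec x\|$, giving $\|\A^{-1}\| \lesssim c^{-1} h^{-1}$. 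Multiplying, $\mathrm{cond}(\A) = \|\A\|\|\A^{-1}\| \lesssim h^{-2}\cdot h^{-1}$... here one must be careful with the weights; the clean way is to use the symmetric weighting so that the $h$-powers in the forward and inverse estimates combine to exactly $h^{-2}$. This bookkeeping of $h$-powers between the product norm, the weighted Euclidean norm, and the plain Euclidean norm is the main obstacle, and it is precisely the calculation carried out in section~4.2 of \cite{ElmanBook}, which the corollary invokes.

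Concretely, I would proceed as follows. First, record $M_u \simeq h\,I$, $M_p\simeq h\,I$ with constants independent of the cut position (shape regularity of $\T_h^\Gamma$). Second, combine this with \eqref{equiv1} and \eqref{fundC} to identify the weighted Euclidean norm $\|(\vec u,\vec\lambda)\|_W^2 := h^{-3}\la M_u\vec u,\vec u\ra + h^{-1}\la M_p\vec\lambda,\vec\lambda\ra \simeq \|(P_h^U\vec u, P_h^Q\vec\lambda)\|_{\bU\times Q}^2$, and note $c_1 \|\cdot\| \le \|\cdot\|_W \le c_2 h^{-1}\|\cdot\|$ for the generalized pair normalization (from $h^{-3}\cdot h = h^{-2}$ on the velocity block and $h^{-1}\cdot h = 1$ on the pressure block, so the extreme eigenvalues of the weight matrix lie between $O(1)$ and $O(h^{-2})$, hence the square roots between $O(1)$ and $O(h^{-1})$). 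Third, translate \eqref{infsupA} and the continuity of $\bA_h$ into: $\|\A\vec x\|_{W^{-1}} \simeq \|\vec x\|_W$ for all $\vec x$, where $\|\cdot\|_{W^{-1}}$ is the dual weighted norm. Fourth, conclude $\mathrm{cond}_W(\A) = O(1)$, and pass from the $W$-norm to the Euclidean norm: $\mathrm{cond}(\A) \le \kappa(W)\cdot\mathrm{cond}_W(\A) \lesssim h^{-2}$, where $\kappa(W) = (c_2 h^{-1}/c_1)^2 = O(h^{-2})$ is the condition number of the weight matrix. Finally, $\|\A\|\|\A^{-1}\| = \mathrm{cond}(\A) \le c\,h^{-2}$, which is \eqref{Condest}. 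The only genuinely delicate point is ensuring every constant here is independent of how $\Gamma$ intersects $\mathcal{T}_h$; this is guaranteed because the weight matrices $M_u$, $M_p$ are $\OGamma$-mass matrices (cut-independent) and because the stabilization terms in $A_h$ and $s_h$ were designed precisely so that \eqref{equiv1}, \eqref{fundC}, and the discrete inf-sup \eqref{infsupest} hold with cut-independent constants.
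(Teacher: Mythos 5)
Your overall strategy --- pass to a weighted Euclidean norm in which $\A$ is uniformly well-conditioned and then pay the condition number of the weight --- is the right idea and is essentially the content of the paper's appeal to Theorem~4.7 of \cite{ElmanBook}. However, as written the argument has a genuine flaw in the key step. You claim that $\|(\vec u,\vec\lambda)\|_W^2:=h^{-3}\la M_u\vec u,\vec u\ra+h^{-1}\la M_p\vec\lambda,\vec\lambda\ra$ is uniformly equivalent to $\|(P_h^U\vec u,P_h^Q\vec\lambda)\|^2_{\bU\times Q}$, citing \eqref{equiv1} and \eqref{fundC}. The pressure half is fine, but the velocity half is false: \eqref{equiv1} says $\|\bv\|_U^2\simeq\|\bv_T\|_1^2+h^{-2}\|v_N\|_{L^2}^2+h^{-1}\|\bv\|_{L^2(\OGamma)}^2$, and the first two terms are bounded \emph{above} by $h^{-3}\|\bv\|_{L^2(\OGamma)}^2$ via inverse inequalities but admit no matching lower bound; Lemma~\ref{lemprecond} gives only the two-sided sandwich $\nu_{A,1}h^{-1}M_u\le A\le\nu_{A,2}h^{-3}M_u$, with an $h^{-2}$ gap that is genuinely attained (e.g.\ for the interpolant of a smooth tangential field, $\|\bv\|_U^2\simeq h^{-1}\|\bv\|_{L^2(\OGamma)}^2\ll h^{-3}\|\bv\|_{L^2(\OGamma)}^2$). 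Consequently your ``Third'' step, $\|\A\vec x\|_{W^{-1}}\simeq\|\vec x\|_W$, does not follow from \eqref{infsupA}: the inf-sup direction loses a factor $h$ on the test function and another on the trial function, so what you can actually prove with this $W$ is $\mathrm{cond}_W(\A)\lesssim h^{-2}$, and then $\kappa(W)\cdot\mathrm{cond}_W(\A)$ only gives $h^{-4}$. A second, more cosmetic error: a $3$D $P_1$ mass matrix on a shape-regular mesh satisfies $M_u\simeq h^3 I$ and $M_p\simeq h^3 I$, not $\simeq h\,I$; your individual bounds $\|\A\|\lesssim h^{-2}$ and $\|\A^{-1}\|\lesssim 1$ should read $\|\A\|\lesssim 1$ and $\|\A^{-1}\|\lesssim h^{-2}$. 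The product comes out right only because both blocks carry the same wrong factor.

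The repair is small but essential: choose the weight $W=\mathrm{diag}(A,S_Q)$ rather than $\mathrm{diag}(h^{-3}M_u,h^{-1}M_p)$. Then $\|\vec x\|_W^2=\|\bu_h\|_U^2+\|p_h\|_Q^2$ \emph{is} the product norm, so $\mathrm{cond}_W(\A)=O(1)$ is exactly the continuity of $\bA_h$ together with \eqref{infsupA}, with no slack. The price of returning to the Euclidean norm is $\kappa(W)$, and here Lemma~\ref{lemprecond} plus $M_u,M_p\simeq h^3I$ (with cut-independent constants, since these are full $\OGamma$ mass matrices) give $\lambda(A)\subset[c\,h^2,\,C]$ and $\lambda(S_Q)\simeq h^2$, hence $\kappa(W)\lesssim h^{-2}$ and $\mathrm{cond}(\A)\le\kappa(W)\,\mathrm{cond}_W(\A)\lesssim h^{-2}$. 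This is the same bookkeeping the paper performs in eigenvalue language: Theorem~4.7 of \cite{ElmanBook} converts \eqref{spec1}--\eqref{spec2} into the inclusion $\sigma(Q^{-1}\A)\subset[-c_0h^{-1},-c_1h^{-1}]\cup[d_0h^{-1},d_1h^{-3}]$ with $Q=\mathrm{diag}(M_u,M_p)$, whose spread is $h^{-2}$, and the uniformly bounded $\kappa(Q)$ transfers this to $\A$ itself. Note that in both the correct weighted-norm version and the paper's version, the $h^{-2}$ in \eqref{Condest} comes precisely from the $h^{-2}$ gap between the two sides of \eqref{spec1} --- the gap your claimed norm equivalence erases.
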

\begin{proof}
Take $Q_A:=M_u$, $Q_S:=M_p$. We can apply Theorem~4.7 from \cite{ElmanBook} with (notation from \cite{ElmanBook}) preconditioners $\bP=M_u$, $T=M_p$. This yields that all eigenvalues of $Q^{-1}\A$ are contained in the union of intervals
\begin{equation}\label{EigBound}
[-c_0 h^{-1}, -c_1 h^{-1}] \cup [d_0 h^{-1}, d_1,h^{-3}] ,
\end{equation}
with constants $c_0 > c_1 > 0$, $0 < d_0 < d_1$ that depend only on the constants $ \nu_{A,i}$, $\nu_{S,i}$ in \eqref{spec1}, \eqref{spec2}.
From this spectral estimate and the  fact that $Q$ has a uniformly bounded condition number we conclude that \eqref{Condest} holds.
\end{proof}
\ \\[2ex]
The application of Theorem~4.7 from \cite{ElmanBook} also yields the following result.
\begin{corollary} \label{cor2}
Let $Q_A \sim A$ be a uniformly spectrally equivalent preconditioner of $A$ and $Q_S:=M_p$ or $Q_S:=S_Q$. For the spectrum $\sigma(Q^{-1}\A)$
of the preconditioned matrix  we have
\[ \sigma(Q^{-1}\A) \subset \big([C_{-},c_{-}]\cup[c_+,C_+]\big), \]
with some constants
$C_{-} < c_{-} < 0 < c_+ < C_+$ independent of $h$ and the position of $\Gamma$.
\end{corollary}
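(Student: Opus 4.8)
The plan is to derive this as a direct consequence of the abstract eigenvalue bound for block-diagonally preconditioned symmetric saddle point matrices — the same Theorem~4.7 of \cite{ElmanBook} already used for Corollary~\ref{corr:pc} — but this time fed with a \emph{uniform} (i.e.\ $h$- and $\Gamma$-position-independent) spectral equivalence for \emph{both} diagonal blocks instead of only for the $(1,1)$ block. So the whole argument is: recast the hypotheses as two-sided spectral inequalities, check that all the constants involved are uniform, and quote the abstract theorem.

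First I would write the hypotheses in spectral form. The assumption $Q_A\sim A$ means there are constants $0<\gamma_{A,1}\le\gamma_{A,2}$, independent of $h$ and of how $\Gamma$ cuts $\mathcal{T}_h$, with $\gamma_{A,1}Q_A\le A\le\gamma_{A,2}Q_A$. For the $(2,2)$ block, Lemma~\ref{lemprecond} already supplies the analogous uniform equivalence between the Schur complement $S=BA^{-1}B^{T}+C$ and $Q_S$: directly from \eqref{spec3} when $Q_S=S_Q$, and from \eqref{spec2} when $Q_S$ is the pressure mass matrix (here one uses that it is the $h^{-1}$-scaled matrix $h^{-1}M_p$ that is uniformly equivalent to $S$; rescaling $Q_S$ by a scalar only rescales the constants produced below, so the conclusion is unaffected). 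I would also record the structural facts that $A$ is SPD, $C$ is SPSD, hence $S$ is SPD, and that the two condition numbers $\gamma_{A,2}/\gamma_{A,1}$ and $\gamma_{S,2}/\gamma_{S,1}$, as well as $\mathrm{cond}(Q_A)$ and $\mathrm{cond}(Q_S)$, are bounded independently of $h$ and of the position of $\Gamma$.

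Then I would apply Theorem~4.7 of \cite{ElmanBook} with preconditioner blocks $\bP=Q_A$ and $T=Q_S$. That theorem places $\sigma(Q^{-1}\A)$ inside the union of one strictly negative and one strictly positive closed interval whose four endpoints are explicit functions of the extreme generalized eigenvalues of $A$ relative to $Q_A$ and of $S$ relative to $Q_S$ only. Substituting the bounds of the previous step turns these endpoints into constants $C_-<c_-<0<c_+<C_+$ depending solely on $\gamma_{A,1},\gamma_{A,2},\gamma_{S,1},\gamma_{S,2}$, hence independent of $h$ and of the position of $\Gamma$; this is exactly the claimed inclusion $\sigma(Q^{-1}\A)\subset[C_-,c_-]\cup[c_+,C_+]$. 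I do not expect a genuine obstacle here — it is essentially bookkeeping — but the one point that deserves attention is that the negative interval stays bounded away from $0$ \emph{uniformly}, i.e.\ $c_-<0$ with an $h$-independent gap. In the abstract theorem this separation is precisely a consequence of the \emph{lower} bounds $\gamma_{S,1}Q_S\le S$ in \eqref{spec2}, \eqref{spec3}, which themselves encode the discrete inf-sup stability of Lemma~\ref{lemdiscreteinfsup} together with the pressure stabilization $s_h(\cdot,\cdot)$; since those ingredients hold with constants independent of $h$ and of how $\Gamma$ intersects the background mesh, so does the separation of the two spectral intervals.
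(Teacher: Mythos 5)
Your overall strategy is exactly the paper's: the paper's proof of Corollary~\ref{cor2} is literally a one-line citation of Theorem~4.7 of \cite{ElmanBook}, and you have filled in precisely the intended bookkeeping — uniform two-sided spectral bounds for $Q_A^{-1}A$ from the hypothesis $Q_A\sim A$, uniform bounds for $Q_S^{-1}S$ from Lemma~\ref{lemprecond}, and the observation that the four interval endpoints delivered by the abstract theorem depend only on these extreme generalized eigenvalues. For $Q_S:=S_Q$ this is complete and correct, and your closing remark correctly traces the uniform separation of the negative interval from $0$ back to the discrete inf-sup estimate and the stabilization.

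The one genuine flaw is your parenthetical treatment of the case $Q_S:=M_p$. You correctly notice that \eqref{spec2} gives $S\simeq h^{-1}M_p$, not $S\simeq M_p$, but the claim that ``rescaling $Q_S$ by a scalar only rescales the constants produced below, so the conclusion is unaffected'' is false when the scalar is $h$-dependent. Replacing $h^{-1}M_p$ by $M_p$ multiplies the extreme generalized eigenvalues $\sigma(Q_S^{-1}S)$ by $h^{-1}$, and the endpoints produced by Theorem~4.7 are nonlinear, non-homogeneous functions of $(\theta,\Theta,\sigma_1,\sigma_2)$; with $\sigma_i\simeq h^{-1}$ they become $h$-dependent and unbounded as $h\to0$ (the paper's own computation \eqref{EigBound} in Corollary~\ref{corr:pc}, done with the unscaled mass matrices, shows for instance a negative interval of magnitude $h^{-1}$). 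Note also that rescaling one diagonal block of $Q$ is not a similarity transformation of $Q^{-1}\A$ composed with a global scaling, so the spectrum does not simply rescale. The conclusion with $h$-independent constants therefore requires $Q_S$ to be \emph{uniformly} spectrally equivalent to $S$ itself, i.e.\ $S_Q$ or the scaled mass matrix $h^{-1}M_p$. This imprecision is arguably already present in the statement of the corollary, but your proposed justification does not repair it; you should either replace $M_p$ by $h^{-1}M_p$ or delete the claim that the unscaled choice is harmless.
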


In section \ref{steady_Stokes} we study the performance of preconditioner \eqref{block} with $Q_A = A$ and $Q_S = S_Q$.

\section{Numerical experiments} \label{sectExp}

A series of numerical tests is presented to showcase the main features and performance
of the TraceFEM  for the surface Stokes problem.

As explained in Remark \ref{remimplementation}, we approximate the surface
$\Gamma$ with  a piecewise planar  approximation $\Gamma_h$,
with ${\rm dist}(\Gamma,\Gamma_h) \lesssim h^2$.
We use piecewise linear finite elements for both
velocity and pressure in problem \eqref{discrete}.
Higher order finite elements are possible (see, e.g., \cite{grande2017higher})
and will be addressed in a forthcoming paper.

We remind that a second source of geometric error is given by the
approximation $\bn_h$ of the exact normal $\bn$. For the numerical results
below, we choose an approximation
$\bn_h=\frac{\nabla\phi_h}{\|\nabla\phi_h\|_2}$, where
where $\phi_h$ is defined as a $P_2$ nodal interpolant of the level set function.

We first consider the Stokes problem, i.e.~\eqref{strongform-1}-\eqref{strongform-2}
 on the unit sphere. To satisfy assumption~\ref{A1}, we set $\alpha = 1$. The goals of this first test are:
to check the spatial accuracy of the TraceFEM, thereby verifying numerically
the theoretical results in Corollary~\ref{cor1} and Theorem~\ref{thmL2};
to check the sensitivity of the spatial discretization error with respect to the pressure
stabilization parameter $\rho_p=c_ph$; and to illustrate the behavior of a preconditioned MINRES solver.
Regarding other parameters (cf. \eqref{parameters}) we note the following:   different choices of the parameter $\rho_u$ were analysed in \cite{burman2016cutb,grande2017higher,gross2017trace}, and the TraceFEM was found to be
very robust with respect to the variation of this parameter. Based on these experiences  we simply set  $\rho_u:=h$. Regarding the penalty parameter $\tau$, previous numerical studies of vector surface problems  \cite{hansbo2016stabilized,hansbo2016analysis,reuther2017solving} all suggest that $\tau$ should be
``sufficiently'' large. This is consistent with our experience, which shows that the approach is largely insensitive to the variation of $\tau$ once it is large enough.

Next, we consider in section~\ref{unsteady_Stokes} the unsteady Stokes problem discretized in
time by the backward Euler method. With this second test we want to illustrate
the expected evolution of the flow to the reference Killing field for different meshes
and different time steps. Finally, in section \ref{test3} we show a flow field computed on an implicitly given
manifold with (strongly) varying curvature.

\subsection{Stokes problem on the unit sphere}\label{steady_Stokes}


The surface $\Gamma$  is the unit sphere,
centered at the origin.
We characterize it as the zero level of the level set function $\phi(\bx) = \|\bx\|_2 -1$,
where $\bx = (x_1, x_2, x_3)^T$.
We consider the following exact solution to problem \eqref{strongform-1}-\eqref{strongform-2}:
\begin{equation} \label{exact}
\begin{split}
\bu^*&=\bP(-x_3^2,x_2,x_1)^T \in\bV_T, \\
p^* &= x_1x_2^3+x_3 
\in\ L_0^2(\Gamma).  \\
\end{split}
\end{equation}
The forcing term $\mathbf{f}$ in eq.~\eqref{strongform-1} and
source term $g$ in eq.~\eqref{strongform-2} are readily computed from the
above exact solution.
Notice that the pressure average is zero.
We set $\alpha = 1$ to exclude the Killing vectors on a sphere
from the kernel.
The sphere is embedded in an outer cubic domain $\Omega=[-5/3,5/3]^3$.
The triangulation $\T_{h_\ell}$ of $\Omega$ consists of $n_\ell^3$ sub-cubes,
where each of the sub-cubes is further refined into 6 tetrahedra.
Here $\ell\in\Bbb{N}$ denotes the level of refinement, with the associated
mesh size $h_\ell= \frac{10/3}{n_\ell}$ and $n_\ell= 2^{\ell+1}$.
Parameters $\tau$, {$\rho_p$, and $\rho_u$ are set as in \eqref{parameters}
with values of $c_\tau$, $c_p$, and $c_u $}
that will be specified for each case.
The velocity and pressure computed with the mesh associated to
refinement level $\ell=5$ are illustrated in Figure~\ref{fig:test8_solution}.

\begin{figure}[htp!]
  \centering
  \subfloat[Velocity]{\includegraphics[width=.48\textwidth]{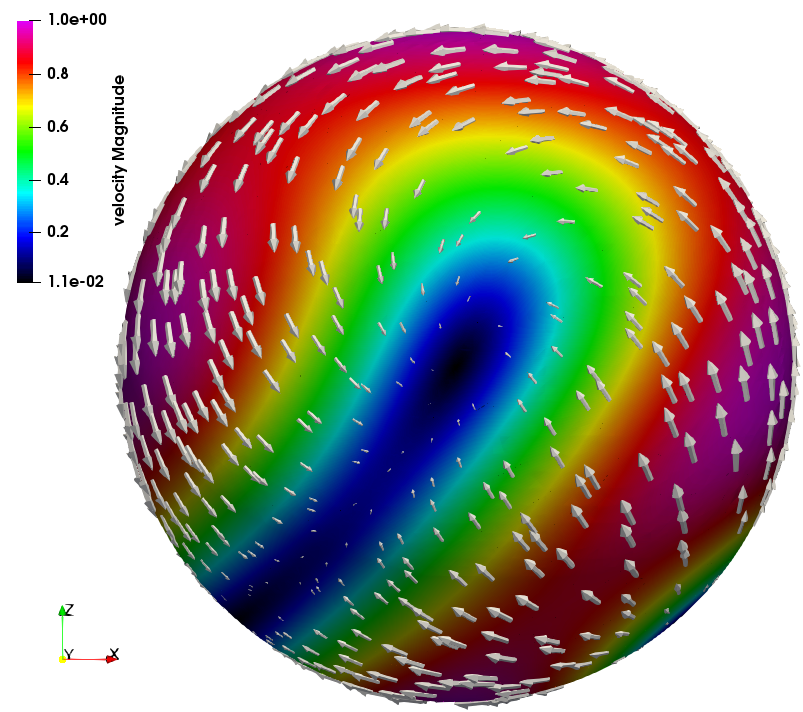}}
  \subfloat[Pressure]{\includegraphics[width=.48\textwidth]{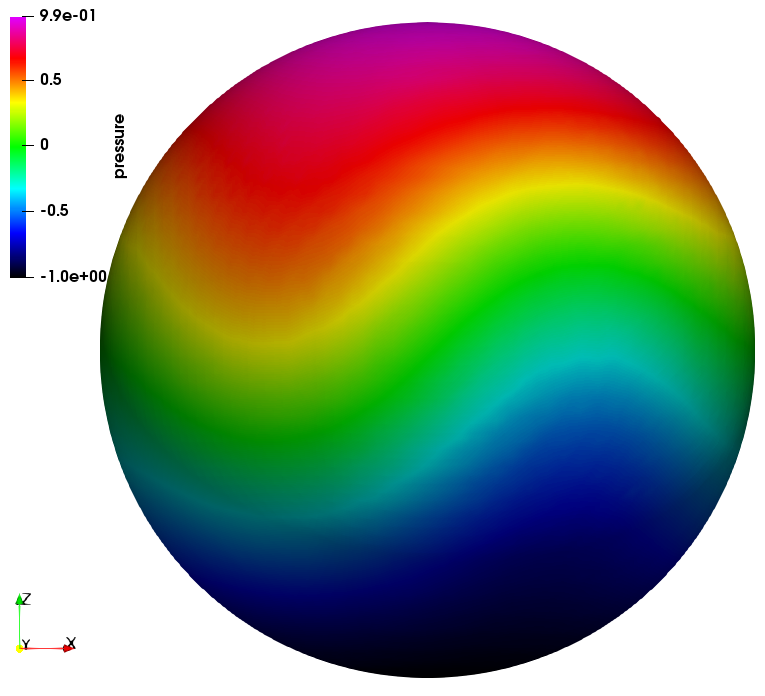}}
  \hfill
  \caption{Velocity and pressure computed on refinement level $\ell = 5$ mesh.}
  \label{fig:test8_solution}
\end{figure}


We report in Fig.~\ref{fig:test8_h-convergence} the $L^2(\Gamma)$ and $H^1(\Gamma)$ norms  of the error for the velocity, $L^2(\Gamma)$ norm of the normal velocity component,
and the $L^2(\Gamma)$ norm of the error for the pressure plotted against the refinement level $\ell$. All
the norms reported in Fig.~\ref{fig:test8_h-convergence}, {and also in Fig.~\ref{fig:test8_alpha_convergence} and
\ref{fig:test10_kin}}, \emph{are computed on the approximate surface} $\Gamma_h$, {while for the exact velocity and pressure solutions (on $\Gamma$) we use the continuous extension as specified in \eqref{exact}.}
We observe that optimal convergence orders are achieved for  $\|\bu^*-\bu_h\|_{H^1(\Gamma)}$
and $\|\bu^* -\bP\bu_h\|_{L^2(\Gamma)}$, as predicted by the theoretical results in
Corollary~\ref{cor1} and Theorem~\ref{thmL2}. We note though that the theoretical analysis
does not account for the geometric errors.
For $\|p  - p_h \|_{L^2(\Gamma)}$, we see a faster convergence than predicted by
Corollary~\ref{cor1}.
The results shown in Fig.~\ref{fig:test8_h-convergence} have been obtained
for $c_\tau = 1$, { $c_p =1$, and $c_u = 1$}.

\begin{figure}
\begin{center}
  \begin{tikzpicture}
   \def\vara{100}
   \def\varb{1}
   \begin{semilogyaxis}[ xlabel={Refinement level $\ell$}, ylabel={Error}, ymin=1E-4, ymax=100, legend style={ cells={anchor=west}, legend pos=south west} ]
    \addplot+[red,mark=o,mark size=3pt,line width=1.5pt] table[x=level, y=L2v] {test8_h.dat};
    \addplot+[blue,mark=+,mark size=3pt,line width=1.5pt] table[x=level, y=H1v] {test8_h.dat};
    \addplot+[black!30!yellow,mark=diamond,mark size=3pt,line width=1.5pt] table[x=level, y=L2vT] {test8_h.dat};
    \addplot+[black!30!green,mark=x,mark size=3pt,line width=1.5pt] table[x=level, y=L2p] {test8_h.dat};
    \addplot[dashed,line width=1pt] coordinates { 
      (0,\vara) (1,\vara*0.5) (2,\vara*0.25) (3,\vara*0.125) (4,\vara*0.0625) (5,\vara*0.03125) (6,\vara*0.03125/2)
    };
    \addplot[dotted,line width=1pt] coordinates { 
     (0,\varb) (1,\varb*0.5*0.5) (2,\varb*0.25*0.25) (3,\varb*0.125*0.125) (4,\varb*0.0625*0.0625) (5,\varb*0.03125*0.03125) (6,\varb*0.03125*0.03125*0.5*0.5)
    };
    \legend{$\|\bu^* - \bu_h\|_{L^2(\Gamma)}$, $\|\bu^* -\bu_h\|_{H^1(\Gamma)}$, $\|\bu_h \cdot \bn_h \|_{L^2(\Gamma)}$, $\|\bp^* - \bp_h\|_{L^2(\Gamma)}$, $\mathcal{O}(h^1)$, $\mathcal{O}(h^{2})$}
   \end{semilogyaxis}
  \end{tikzpicture}
  \caption{
  $L^2(\Gamma)$ and $H^1(\Gamma)$ norms of the error for the velocity, $L^2(\Gamma)$ norm of the normal velocity
and the $L^2(\Gamma)$ norm of the error for the pressure plotted against the refinement level $\ell$.
The computational results have been obtained with {$\rho_p=h$}.}
  \label{fig:test8_h-convergence}
  \end{center}
\end{figure}
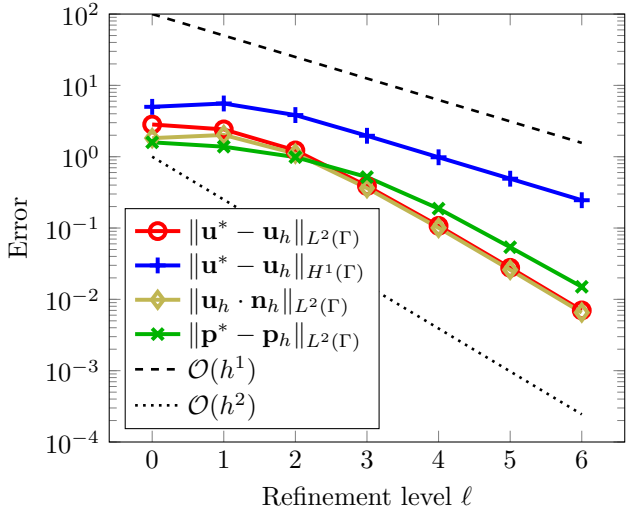

We now vary the value of parameter {$c_p$}. Figure~\ref{fig:test8_alpha_convergence}
shows the $L^2(\Gamma)$ and $H^1(\Gamma)$ norms of the error for the velocity
and the $L^2(\Gamma)$ norm of the error for the pressure for {$c_p \in [5 \cdot 10^{-3}, 10]$}.
We see that as the value of {$c_p$} moves away from 1 the errors
either remain almost constant or slightly increase. Thus among the values of
{$c_p$} that we considered, {$c_p = 1$} is close to an ``optimal'' choice, {and there is a  low sensitivity of the accuracy depending on $c_p$.}

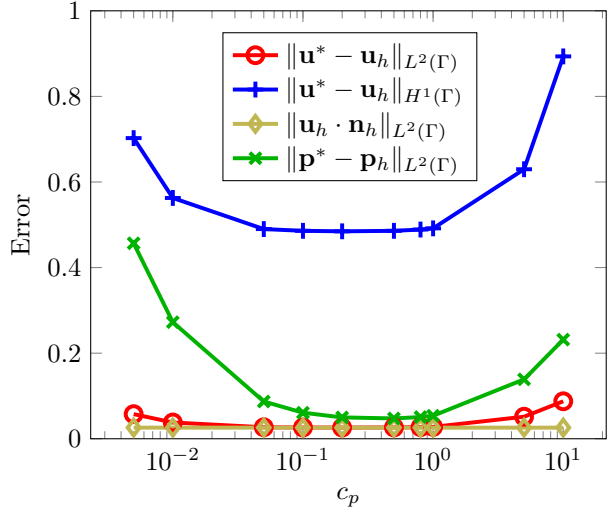
\begin{figure}
\begin{center}
 \begin{tikzpicture}
  \begin{semilogxaxis}[ xlabel={$c_p$}, ylabel={Error}, ymin=0, ymax=1, legend style={ cells={anchor=west}, at={(0.75,0.95)}} ]
   \addplot+[red,mark=o,mark size=3pt,line width=1.5pt] table[x=alpha, y=L2v] {test8_alpha.dat};
   \addplot+[blue,mark=+,mark size=3pt,line width=1.5pt] table[x=alpha, y=H1v] {test8_alpha.dat};
    \addplot+[black!30!yellow,mark=diamond,mark size=3pt,line width=1.5pt] table[x=alpha, y=L2vN] {test8_alpha.dat};
   \addplot+[black!30!green,mark=x,mark size=3pt,line width=1.5pt] table[x=alpha, y=L2p] {test8_alpha.dat};
    \legend{$\|\bu^* - \bu_h\|_{L^2(\Gamma)}$,$\|\bu^* -\bu_h\|_{H^1(\Gamma)}$,$\|\bu_h \cdot \bn_h \|_{L^2(\Gamma)}$,$\|\bp^* - \bp_h\|_{L^2(\Gamma)}$}
  \end{semilogxaxis}
 \end{tikzpicture}
  \caption{
  $L^2(\Gamma)$ and $H^1(\Gamma)$ norms of the error for the velocity, $L^2(\Gamma)$ norm of the normal velocity
and the $L^2(\Gamma)$ norm of the error for the pressure plotted
against the value of $c_p$.
The computational results have been obtained with the $\ell=5$ mesh.}
   \label{fig:test8_alpha_convergence}
   \end{center}
\end{figure}

For the solution of the linear system \eqref{SLAE}, we used the preconditioned
MINRES method  with the block-diagonal preconditioner \eqref{block}.  The preconditioner $Q_A$ is defined
through the application a standard SSOR-preconditioned CG method to solve {$A \vec v = \vec b$} iteratively, with a tolerance such that the initial residual is reduced by a factor of $10^4$.
The same strategy is used to define $Q_S$, i.e.  it is also defined
through the application a standard SSOR-preconditioned CG method to solve {$S_Q \vec p = \vec c$} iteratively, with a tolerance such that the initial residual is reduced by a factor of $10^4$.
As initial guess for the MINRES method we chose the zero vector.
We adopted a stopping criterium based on the Euclidean norm
of the residual of system \eqref{SLAE} and set the stopping tolerance to $10^{-8}$.
We report in Tables~\ref{tab:test8_h_iter_res} and \ref{tab:test8_alpha_iter_res}
the number of MINRES iterations for the simulations in Figures
\ref{fig:test8_h-convergence} and \ref{fig:test8_alpha_convergence}, respectively.

\begin{table}[ht!]
  \centering
    \begin{tabular}{r|ccccccc}
      \toprule
      $\ell$          & 0 &1 &2 &3 &4 &5 &6 \\ \midrule
      $\#$ iterations & 10&14&20&26&29&29&29\\  
average $\#$ inner CG iterations, precond. $A$  &5&8&16&27&51&98&184\\
average $\#$ inner CG iterations, precond. $S_Q$ &6&7&7&8&8&8&8\\

    \end{tabular}
    \caption{Total number of MINRES iterations  for different refinement levels $\ell$ and the average number of inner CG iterations needed to compute the action of the preconditioners.
For all the simulations we set $c_p = 1$.}
    \label{tab:test8_h_iter_res}
\end{table}

\begin{table}[ht!]
  \centering
  \begin{tabular}{r|ccccccc}
      \toprule
     $c_p$          & 0.01 &0.05 &0.1 &0.5 &1 &5 &10 \\ \midrule
      $\#$ iterations & 120&54&39&20&29&64&86\\  
    \end{tabular}
    \caption{Total number of MINRES iterations  for different values of $c_p$.
     All the simulations used the refinement level $\ell = 5$ mesh.}
    \label{tab:test8_alpha_iter_res}
\end{table}

From Table~\ref{tab:test8_h_iter_res} we see that the number of MINRES iterations
grows from refinement level $\ell = 0$ to $\ell =2$,
then it increases slightly till $\ell =4$, and for $\ell \geq 4$ it levels off. This observation is consistent with the result of Corollary~\ref{cor2}. We also report the average number of inner
preconditioned CG iterations needed to compute the matrix--vector product with preconditioners $A$ and $S_Q$.
We see that $S_Q$ is uniformly well-conditioned, while the linear growth of the number of
the preconditioned CG iterations for $A$ is similar to what one expects for a standard
FE discretization of the Poisson problem.

From Table~\ref{tab:test8_alpha_iter_res} we observe that when
using the mesh associated with $\ell = 5$ the number of MINRES iterations
attains its minimum approximately for the same value of  {$c_p$} that minimizes the discretization error.
For $c_p \leq 10^{-3}$, MINRES did not converge within 300 iterations.
For example, for  $c_p=10^{-3}$ (resp.,  $c_p=10^{-4}$) the stopping criterion
is satisfied after 379 (resp., 1182) iterations.
{This may be related to the fact that
the Schur complement is symmetric and positive definite
only if $c_p$ is sufficiently large, cf. Lemma~\ref{lemdiscreteinfsup}.} Hence, the  used preconditioner
is more sensitive to variations of $c_p$ than
to mesh refinement.

From the results in this section we deduce that the straightforward choice {$c_\tau=c_p=c_u = 1$}
is a good compromise between minimizing the spatial discretization error
and keeping the number of MINRES iterations low. Thus, the results in
sections~\ref{unsteady_Stokes} and \ref{complex_geometry} below have been obtained
with {$c_\tau=c_p=c_u = 1$.}

\subsection{Unsteady Stokes problem on the unit sphere}\label{unsteady_Stokes}
In this section, we consider the unsteady incompressible Stokes problem
posed on the surface of a unit sphere and
discretized in time with the implicit Euler method.
This gives rise to problem as in \eqref{strongform-1}-\eqref{strongform-2}
with $\alpha = 1/\Delta t$,
where $\Delta t$ is a time step. 
We assume that no external force is applied
and we set $g = 0$ in equation~\eqref{strongform-2}.
The initial velocity at $t=0$ is chosen to be
\[
\bu_0 = \bn \times{} \nabla{}_{\Gamma}\left( Y^{x_3}_1 +  Y^{x_2}_1 +  Y^{x_3}_2 +Y^{x_3}_3 \right) \,,
\]
where $Y^{x_l}_k$ is a real spherical harmonic of zero order and degree $k$, axisymmetric with respect to the $x_l$ coordinate axis. Notice that $\Div_\Gamma \bu_0=0$.
It can easily be checked that all spherical harmonics have zero angular momentum except for those of first degree.
Since there is only one Killing vector field with given total angular momentum,  we expect the solution to evolve towards the reference Killing vector field with the same non-zero angular momentum as $\bu_0$.
Therefore, the reference Killing vector field corresponds to the rotating motion over the axis $\be_{3} + \be_2$
with angular momentum $\int_{\Gamma{}}\br \times{}  \bn \times{} \nabla{}_{\Gamma}\left( Y^{x_3}_1 +  Y^{x_2}_1  \right)dS$,
where $\br$ is the vector connecting the origin of the axes to point
$\bx$ lying on the unit sphere.

We consider the meshes associated to refinement levels $\ell = 2, 3, 4, 5$
used in Sec.~\ref{steady_Stokes} and two values for the time step $\Delta t = 0.1, 0.01$.
Fig.~\ref{fig:test10} shows both the initial velocity and the
reference Killing vector field computed with mesh $\ell = 5$
and $\Delta t = 0.1$.

\begin{figure}[h!]
  \centering
  \subfloat[Initial velocity]{\includegraphics[width=.48\textwidth]{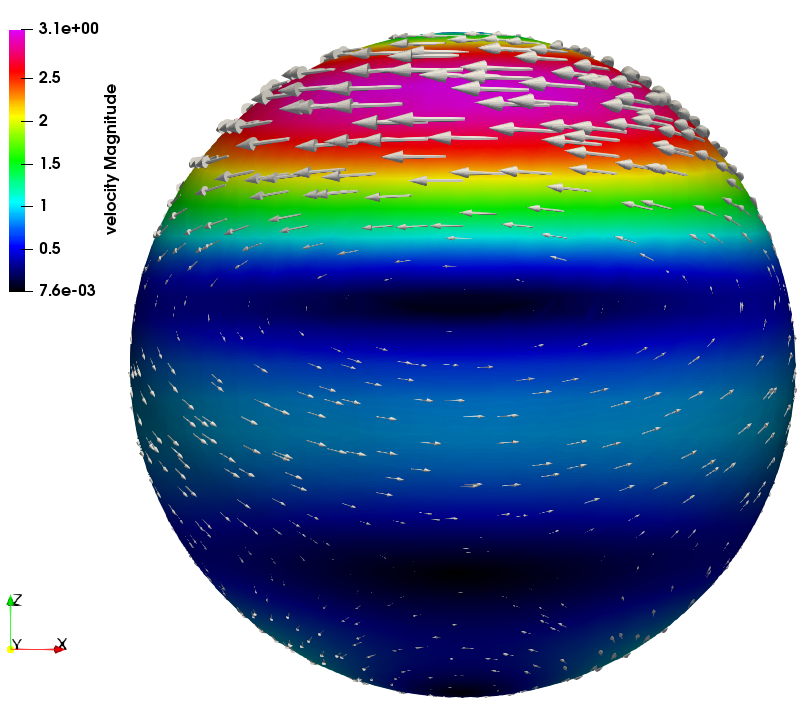}}
  \subfloat[Killing vector field]{\includegraphics[width=.48\textwidth]{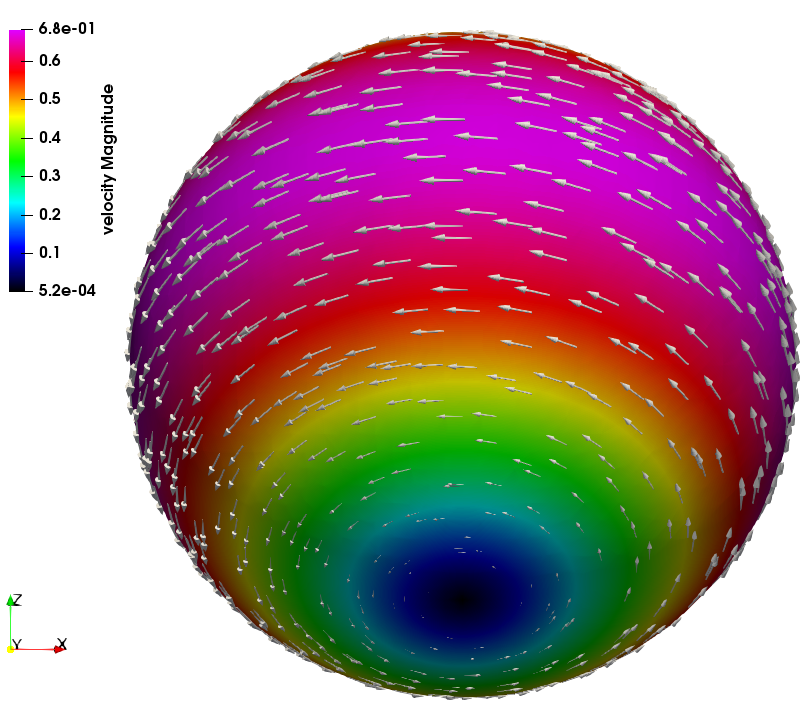}}
  \hfill
  \caption{Initial velocity and reference Killing vector field computed with the mesh  $\ell = 5$  and $\Delta t = 0.1$.}
  \label{fig:test10}
\end{figure}

%

Figure~\ref{fig:test10_kin} shows the evolution of the computed kinetic energy
over time for refinement levels $\ell = 2, 3, 4, 5$. Kinetic energy was calculated after each time step as $0.5\|\bu_h\|^2_{L^2(\Gamma)}$. The reference line corresponds to kinetic energy of the reference Killing vector field.

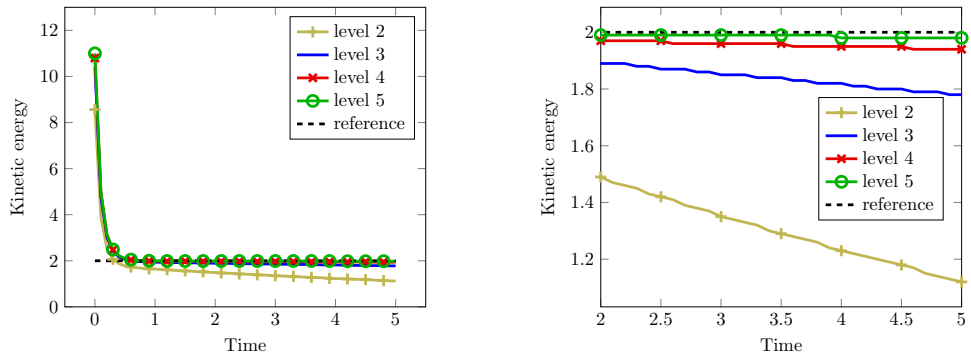
\begin{figure}

 \begin{minipage}{0.45\textwidth}
\begin{center}
  \begin{tikzpicture}[scale=0.7]
   \def\vara{100}
   \def\varb{1}
   \begin{axis}[xlabel={Time}, ylabel={Kinetic energy}, ymin=1E-4, ymax=13, legend style={ cells={anchor=west}, legend pos=north east} ]
    \addplot[black!30!yellow,mark=+,mark size=3pt,mark repeat={3},line width=1.5pt] table[x=time, y=ref2] {kinetic.txt};
    \addplot[blue,mark=,mark size=3pt,mark repeat={3},line width=1.5pt] table[x=time, y=ref3] {kinetic.txt};
    \addplot[black!10!red,mark=x,mark repeat={3},mark size=3pt,line width=1.5pt] table[x=time, y=ref4] {kinetic.txt};
    \addplot[black!30!green,mark=o,mark repeat={3},mark size=3pt,line width=1.5pt] table[x=time, y=ref5] {kinetic.txt};
    \addplot[black,dashed,line width=1.5pt] coordinates {  (0,2)  (5,2)  };
    \legend{level 2, level 3  , level 4 ,   level 5  ,reference}
   \end{axis}
  \end{tikzpicture}
  \end{center}
\end{minipage}
\hfill
\begin{minipage}{0.45\textwidth}
\begin{center}
  \begin{tikzpicture}[scale=0.7]
   \def\vara{100}
   \def\varb{1}
   \begin{axis}[xlabel={Time}, ylabel={Kinetic energy}, xmin=2, xmax=5, legend style={ cells={anchor=west}, , at={(0.95,0.7)}} ]
    \addplot[black!30!yellow,mark=+,mark size=3pt,mark repeat={5},line width=1.5pt] table[x=time, y=ref2] {kinetic.txt};
    \addplot[blue,mark=,mark size=3pt,mark repeat={5},line width=1.5pt] table[x=time, y=ref3] {kinetic.txt};
    \addplot[black!10!red,mark=x,mark repeat={5},mark size=3pt,line width=1.5pt] table[x=time, y=ref4] {kinetic.txt};
    \addplot[black!30!green,mark=o,mark repeat={5},mark size=3pt,line width=1.5pt] table[x=time, y=ref5] {kinetic.txt};
    \addplot[black,dashed,line width=1.5pt] coordinates {  (0,2)  (5,2)  };
    \legend{level 2, level 3  , level 4 ,   level 5  ,reference}
   \end{axis}
  \end{tikzpicture}
   \end{center}
\end{minipage}

 \caption{Time evolution of the kinetic energy computed for different refinement levels and $\Delta t = 0.1$ (left). Zoom-in for time $t = [2,5]$ s (right).}
  \label{fig:test10_kin}
\end{figure}

The time-dependent discrete surface Stokes problem is a dissipative dynamical system. Hence its kinetic energy asymptotically decays exponentially at the rate equal to its minimal eigenvalue. Since the sphere has non-trivial
Killing fields,  for the original differential problem the \emph{asymptotic decay rate} is zero. Although this paper does not analyze eigenvalue convergence, one may expect that the FE problem approximates the asymptotic evolution of the system.
{As an indication for} this, we fit the kinetic energy computed with the meshes associated to refinement levels
$\ell = 2, 3, 4, 5$ and $\Delta t = 0.1, 0.01$ with the function $A\exp{}(-\lambda{}t)$
for time $t \in [2, 5]$. We report in Table~\ref{tab:test10_exp_fitting}
the values of $\lambda$ for each case.
We notice that as the time step goes from 0.1 to 0.01
there is only a small difference in the value of $\lambda$.
From Table~\ref{tab:test10_exp_fitting} we see that as the mesh
gets finer the discrete approximations of the asymptotic decay rate $\lambda$ converge to zero approximately as $O(h^2)$. This convergence is consistent with the second order accuracy of our finite element method.

\begin{figure}
  \centering
    \begin{tabular}{r|cccccccc}
      \toprule
    $\ell$  &   2	&   2	&   3	&   3	&   4	&      4	&  5 & 5\\
    $\Delta t$ & 1e-1  &  1e-2&  1e-1      &  1e-2&  1e-1      &   1e-2&  1e-1  &  1e-2 \\   \midrule
    $\lambda$  & 	9.40e-2	& 	9.75e-2&  2.13e-2 &  2.25e-2&  5.26e-3 &  5.51e-3&  1.64e-3 &  1.65e-3\\
    \end{tabular}
    \caption{Values of $\lambda$ for the exponential fitting with $A\exp{(-\lambda{}t)}$
    of the kinetic energy computed for time $t \in [2, 5]$ with the meshes at refinement levels
$\ell = 2, 3, 4, 5$ and $\Delta t = 0.1, 0.01$.}
    \label{tab:test10_exp_fitting}
\end{figure}

\subsection{Source and sink flow on an implicitly defined surface}\label{complex_geometry}\label{test3}

In this section,
we consider the unsteady surface Stokes problem posed on a more complex manifold.
The surface $\Gamma$ is
implicitly defined as the zero level-set of
\begin{align}
\phi(x_1,x_2,x_3) =&(x_1^2+x_2^2-4)^2+(x_2^2-1)^2+(x_2^2+x_3^2-4)^2+(x_1^2-1)^2 \cl
&+(x_1^2+x_3^2-4)^2+(x_3^2-1)^2-13. \el
\end{align}
The example of $\Gamma$ is taken from~\cite{chernyshenko2015adaptive}.
We embed $\Gamma$ in the cube $[-3,3]^3$ centered at the origin.
We assume that no external force is applied and we start the simulation
from fluid at rest, i.e.~initial velocity is $\bu_0 = \mathbf{0}$.
The flow is driven by the non-zero source term in the mass balance equation~\eqref{strongform-2}:
\[
g(\br)=\frac{1}{h^2}\left(\exp\left({-\frac{\|\br-\ba\|^2}{h^2}}\right)
- \exp\left({-\frac{\|\br-\bb\|^2}{h^2}}\right)\,\right) \,,\]
where $\ba=(-1.0, 1.0,\sqrt{(7 + \sqrt{19})/3})$ and $\bb=(1.0, -1.0 ,-\sqrt{(7 +
\sqrt{19})/3})$ are located on the manifold.
Note that $g$ consists of fluid source and sink, which approximate point source and sink for $h\to0$.

\begin{figure}[htp!]
  \centering
  \subfloat[Velocity]{\includegraphics[width=.48\textwidth]{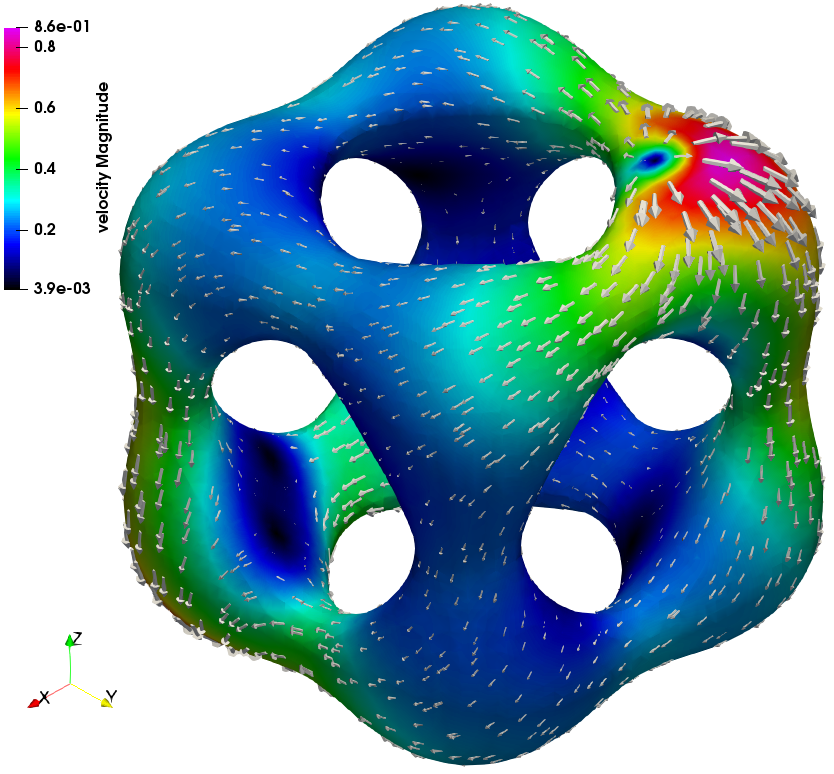}}
  \subfloat[Pressure]{\includegraphics[width=.48\textwidth]{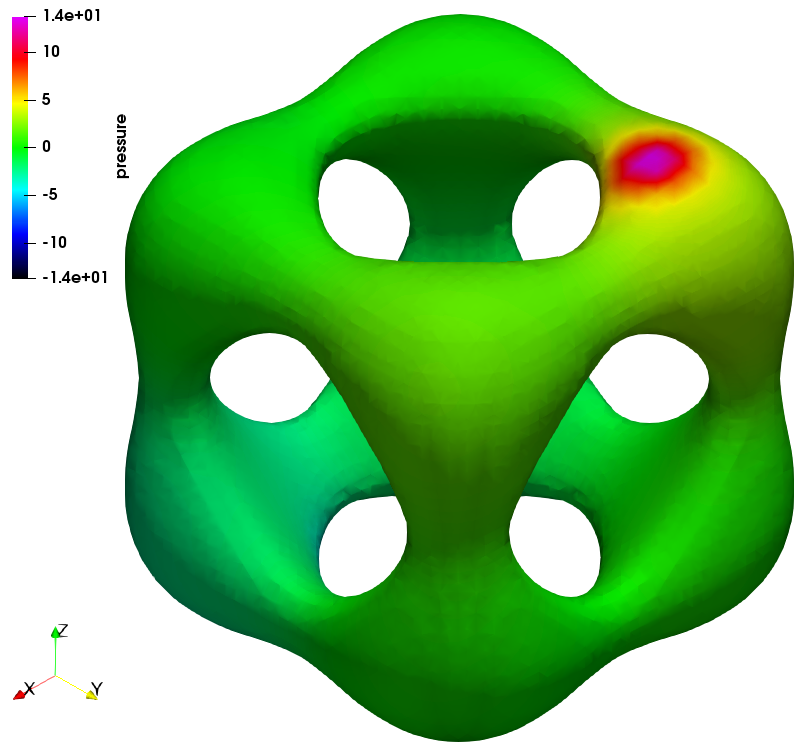}}
  \hfill
  \caption{Velocity and and pressure at $t=10$ computed with  mesh $\ell = 5$ and $\Delta t = 0.1$.}
  \label{fig:test10A}
\end{figure}

We set mesh level $\ell=5$ and time step $\Delta t = 0.1$.  Figure~\ref{fig:test10A}
shows the computed velocity and pressure after they {have (essentially) converged} to an equilibrium state.
The results were obtained for {$c_\tau = 10$, $c_p =1$, and $c_u = 1$}. This example illustrates
the flexibility of the proposed numerical method in handling fluid problems over complex geometries without surface parametrization and mesh fitting.

\bibliographystyle{siam}
\bibliography{literatur}{}

\end{document}